\theoremstyle{definition}
\newtheorem{theorem}{Theorem}[section]
\newtheorem{lemma}[theorem]{Lemma}
\newtheorem{notation}[theorem]{Notation}
\newtheorem{remark}[theorem]{Remark}
\newtheorem{assumption}[theorem]{Assumption}
\newcommand{\y}[1]{\lVert #1 \rVert}
\newcommand{\Y}[1]{\left\lvert#1\right\rvert}
\newcommand{\under}[2]{\underset{#1}{\underbrace{#2}}}
\newcommand{\R}{\mathbb{R}}
\newcommand{\Z}{\mathbb{Z}}
\newcommand{\E}{\mathbb{E}}
\newcommand{\Raeps}{\mathcal{R}_\varepsilon}
\newcommand{\Reps}{\mathcal{R}^*_\varepsilon}
\newcommand{\Repsu}{\mathcal{R}^*_\varepsilon u_\varepsilon}
\newcommand{\p}{2}
\newcommand{\Eepsu}{\mathcal{E}_\varepsilon u_\varepsilon}
\newcommand{\cweighta}[1]{\varpi \left( #1 \right)}
\newcommand{\ocweight}{\varpi}
\newcommand{\Jepsu}{\mathcal{E}^{V}_\varepsilon u_\varepsilon}
\newcommand{\Eloc}{\mathcal{E}^{loc}_\varepsilon u_\varepsilon }
\newcommand{\asarrow}{\xlongrightarrow[\text{a.s.}]{\varepsilon \rightarrow 0} }
\newcommand{\epsarrow}{\xlongrightarrow{\varepsilon \rightarrow 0} }
\newcommand{\f}{w}
\newcommand{\coeff}{\sigma_{\varepsilon, x, y}}
\newcommand{\B}{B}
\newcommand{\q}{\ell}
\newcommand{\Ve}{V(x,y,u_\varepsilon (x)-u_\varepsilon (y))}
\newcommand{\V}{V(x,y,u(x)-u(y))}
\newcommand{\RV}{V(\Reps x,\Reps y,\Repsu (x)-\Repsu (y))}
\newcommand{\RVk}{V(\Reps x,\Reps y,\Repsu^k (x)-\Repsu^k (y))}
\newcommand{\Vk}{V(x,y,u^k(x)-u^k(y))}
\newcommand{\var}[1]{\operatorname{Var}\left[ #1 \right]}
\newcommand{\cE}{{\mathcal E}}
\newcommand{\cA}{{\mathcal A}}
\newcommand{\eps}{\varepsilon}
\newcommand{\el}{\mathrm{el}}
\newcommand{\Erm}{\mathrm{E}}
\newcommand{\longrm}{\mathrm{long}}
\newcommand{\localrm}{\mathrm{local}}
\newcommand{\fs}{\mathfrak{s}}
\def\Xint#1{\mathchoice
{\XXint\displaystyle\textstyle{#1}}%
{\XXint\textstyle\scriptstyle{#1}}%
{\XXint\scriptstyle\scriptscriptstyle{#1}}%
{\XXint\scriptscriptstyle\scriptscriptstyle{#1}}%
\!\int}
\def\XXint#1#2#3{{\setbox0=\hbox{$#1{#2#3}{\int}$}
\vcenter{\hbox{$#2#3$}}\kern-.5\wd0}}
\def\dashint{\Xint-}
\newcommand{\uproman}[1]{\uppercase\expandafter{\romannumeral#1}}
\definecolor{dblue}{RGB}{47,82,143}
\definecolor{grey}{RGB}{118,113,113}
\author[1,2]{\small Patrick Dondl \thanks{patrick.dondl@mathematik.uni-freiburg.de}}
\author[3]{Martin Heida \thanks{heida@wias-berlin.de}}
\author[1,2]{Simone Hermann \thanks{simone.hermann@mathematik.uni-freiburg.de}}
\title{Non-local homogenization limits of discrete elastic spring network models with random coefficients}
\date{}
\affil[1]{\footnotesize 
{Cluster of Excellence livMatS @ FIT – Freiburg Center for Interactive
Materials and Bioinspired Technologies,
University of Freiburg,
Georges-Köhler-Allee 105, 79110 Freiburg, Germany}}
\affil[2]{Department for Applied Mathematics, University of Freiburg, Hermann-Herder-Str. 10, 79104 Freiburg, Germany}
\affil[3]{Weierstrass Institute for Applied Analysis and Stochastics, Mohrenstr. 39, 10117 Berlin, Germany}
\begin{document}
\maketitle
\textbf{Keywords:} Discrete systems, Non-local functionals, Stochastic homogenization, Super-elasti\-city, Spring network model \\

\textbf{MSC Classification:} 74Q15, 26A33, 74B20

\maketitle

\begin{abstract}
This work examines a discrete elastic energy system with local %
interactions described by a discrete second-order functional in the symmetric gradient and additional non-local random long-range interactions. We analyze the asymptotic behavior of this model as the grid size tends to zero.
Assuming that the occurrence of long-range interactions is Bernoulli distributed and depends only on the distance between the considered grid points, we derive -- in an appropriate scaling regime -- a fractional $p$-Laplace-type term as the long-range interactions' homogenized limit.
A specific feature of the presented homogenization process is that the random weights of the $p$-Laplace-type term are non-stationary, thus making the use of standard ergodic theorems impossible.
For the entire discrete energy system, we derive a non-local fractional $p$-Laplace-type term and a local second-order functional in the symmetric gradient.
Our model can be used to describe the elastic energy of standard, homogeneous, materials that are reinforced with long-range stiff fibers.
\end{abstract}

\section{Introduction}
In the field of materials science and mechanics, understanding the elastic properties of a considered material and its energy distribution is essential. In the case of homogeneous or heterogeneous materials with purely local interactions there already exists a wide range of models for the elastic energy. For inhomogeneous materials with non-local components, these models are generally no longer valid.
We focus on a specific model where a homogeneous material is interspersed with randomly distributed long-range elastic rods. Our model is inspired by the complex material system comprising the peel of pomelo fruit, which consists of a soft, elastic foam material interspersed with stiff fibers. As opposed to a typical short-fiber reinforced material, the fibers in the pomelo peel are of a length-scale comparable to that of the entire system. Due to the excellent shock absorption properties of this citrus peel, it makes for an interesting subject for bioinspired technological applications \cite{Buehrig-Polaczek_2016, Thielen2, Pomelo-inspired}. To gain a better understanding of the resulting elastic properties of such random long-fiber reinforced materials we seek to obtain a suitable homogenization scaling limit.

In comparison to standard elastic energy models the mathematical investigation of such materials is much more complex.
We use discrete spring network models which provide a useful framework for the description of many complex elastic phenomena.
To capture the local behavior of the homogeneous material as well as the non-local behavior of the long-range interactions, we introduce a discrete elastic energy functional which consists of a quadratic functional in the discrete symmetric gradient and add an appropriately scaled energy modeling the interaction between the displacement at a given point and some, randomly selected,  counterparts further across the material, reflecting the influence of distant interactions on the energy distribution. 
Our model thus captures both local and non-local effects, providing a comprehensive framework for analyzing the elastic energy distribution in heterogeneous materials subject to random long-range interactions. 

Since local continuum limits of discrete systems have been studied extensively by many authors (see for example \cite{Alicic,Braides2,Braides1,Heida1}), the focus of this work is on the non-local part modeling random interactions between distant points in the material. 
A first approach of a discrete random conductance model with long-range interactions is given in \cite{Heida1}. It showed that if the weights on the far reaching interactions is too low, the limit equation localizes. However, putting more weight on the far reaching interaction moments and assuming that the random field is stationary ergodic, the authors of \cite{Heida2} show that the homogenized limit is given by a fractional Laplace-type term.
However, these assumptions force the weights of individual differences to
decay polynomially with distance. 
Below we will derive a discrete model for elasticity that shares many features of the discrete model \cite{Heida2} but with some important differences:
while in \cite{Heida2} the non-local coefficient field was stationary in both variables and positively bounded from below, our new coefficient field is stationary only in the first variable and it is either $0$ or $1$ with the possibility for $1$ decreasing polynomially in the far distance. This change is necessary to model the long-range interactions inspired by the fiber reinforced material of the pomelo peel, as of course only a comparatively small number of points are connected via such fibers.

From a mathematical perspective, the lower bound on the far field coefficient field made it possible to derive and apply discrete Sobolev-Poincar\'e inequalities for discrete fractional Sobolev spaces and thus no local term was needed in \cite{Heida2} to achieve a uniform $L^p$-bound on the sequence of minimizers. In the present paper, the non-local coefficients dominantly being $0$, we need the local elasticity from a mathematical perspective to achieve enough uniform integrability of solutions to pass to the limit. From another point of view, the stationarity in \cite{Heida2} allowed a relatively straight forward passage to the limit in the coefficient field. However, the break down of stationarity in our case makes it necessary to look for alternatives, which we find in application of Tschebyscheffs inequality.

We show that minimizers of the discrete energy converge to minimizers of a limit energy functional which are simultaneously solutions of
$$-\nabla\cdot\nabla^\fs u(x) +\int_Q \frac{\partial_3V(x,y,u(x)-u(y))}{|x-y|^{d+ps}}\,\mathrm{d} y =f(x)\,.$$

The presented model can be used, for example, as a simplified model for a homogeneous material interspersed with randomly distributed fibers whose material properties are constant and, in particular, do not depend on their length.

\section{Mathematical model and the main result}
We define our discrete model on the re-scaled lattice $\mathbb{Z}_\varepsilon^d := \varepsilon \mathbb{Z}^d$, where $0<\varepsilon <1$.
For a bounded domain $Q \subset \mathbb{R}^d$ we define the rescaled grid $Q_\varepsilon := Q \cap \mathbb{Z}_\varepsilon^d$. 
Furthermore, we assume that there is a non-negative conductance $\ocweight(x,y)$ between any two points $x,y\in Q_\varepsilon$, which defines the interactions between $x$ and $y$ through which energy or information can flow.

To introduce difference-type operators on the discrete grid $Q_\varepsilon$, we use discrete functions $u_\varepsilon : Q_\varepsilon \rightarrow \mathbb{R}^d$ and extend them by zero to $\Z_\varepsilon^d$. In the following $u_\varepsilon$ describes the elastic deformation of each point in $Q_\varepsilon$ and $f_\varepsilon$ denotes the force acting on the material.

\subsection{Elastic spring and rod network model} \label{section2.1}

 When a spring or an elastic rod is subjected to external forces, it experiences changes in its length and shape, leading to internal stress described by the Cauchy stress formula
\begin{equation}\label{eq:nonlin-elast}
    \Erm_\longrm(\delta u,b) = \frac{\left|b + |b|\,\delta u\right| - |b|}{|b|},
\end{equation}
where $b$ is the vector representing the initial distance between two points of the spring, and $\delta u$ is the displacement gradient, given by $\delta u = \frac{u(x + b) - u(x)}{\Y{b}}$. This formulation of strain measures the relative extension of the spring, considering both the original and deformed states.

This approach aligns with Hooke's Law in linear elasticity, which states that the stress is directly proportional to the strain within the elastic limit of the material:
\begin{equation}
    \Sigma = c_\Sigma \Erm,
\end{equation}
where $\Sigma$ is the stress, $c_\Sigma$ is the Young's modulus of the material, and $\Erm$ is the strain. 

For small displacements, the Cauchy stress can be linearized, assuming that the displacements are much smaller than the characteristic dimensions of the object. The linearized stress is given by:
\begin{equation}\label{eq:linearized-elast}
    \Erm_\localrm(\delta u,b) \approx \frac{b}{|b|} \cdot \delta u
\end{equation}
This linear approximation simplifies the analysis but is valid only for small displacements. In the case of larger displacements or far distances, the linearization looses accuracy, as it fails to capture the non-linear behavior and the possible geometric changes in the material.

For this reason, in the below model we assume that the interaction between neighboring nodes of the spring network is given by linearized elasticity, while the long-range interaction is discribed by an abstract potential $V$ that may represent both $\Erm_\longrm$ or $\Erm_\localrm$.

\subsubsection{Discrete elastic energy}

Spring network models have been used before, e.g. in \cite{neukamm2018stochastic} for upscaling elasticity, and we refer to \cite{neukamm2018stochastic} for their applications in numerical models and analysis. We follow their approach and  
propose the following model based on \ref{eq:linearized-elast} for the local elastic energy of the spring-rod-network:
\begin{equation}\label{eq:discrete-elastic-energy}
\cE^\eps_\el u_\eps:=\varepsilon^{d} \sum_{x\in \Z^d_\varepsilon} \sum_{b\in \B} \Y{ \frac{u_\varepsilon(x+\varepsilon b) - u_\varepsilon(x)}{\eps|b|} \cdot \frac{b}{\Y{b}} }^\p  ,
\end{equation}
where 
$$\B := \left\{ b\in \mathbb{R}^d ~\big\vert ~ b= \lambda_1 e_1 + \lambda_2 e_2 + \ldots + \lambda_d e_d ~:~ \lambda_1, \ldots, \lambda_d \in \{-1,0,1\} \right\}\,.$$ 
$\cE^\eps_\el$ is a discrete version of the elastic energy $\int_Q\nabla^\fs u:\cA :\nabla^\fs u$.
This can be seen
by replacing
$\frac{u_\varepsilon(x+\varepsilon b) - u_\varepsilon(x)}{\eps|b|}$ by the directional derivative $\partial_b u_\varepsilon=\nabla u_\varepsilon \,\frac{b}{|b|}$:%

For $d=2$ and assuming $u_\eps$ is continuously differentiable, definition \eqref{eq:discrete-elastic-energy} turns into
\begin{align}
\cE^\eps_\el u_\eps &\approx \varepsilon^{d} \sum_{x\in \Z^d_\varepsilon} \sum_{b\in \B} \Y{ \frac{b}{\Y{b}}\,\nabla u_\eps \frac{b}{\Y{b}} }^\p\label{eq:elastic-concept}\\
& = \varepsilon^{d} \sum_{x\in \Z^d_\varepsilon} \frac54(|\partial_1u_{\eps,1}|^2+ |\partial_2u_{\eps,2}|^2)+\frac12(\partial_1u_{\eps,1}+\partial_2u_{\eps,2})(\partial_1u_{\eps,2}+\partial_2u_{\eps,1}) \\
& \qquad+\frac12\partial_1u_{\eps,1}\partial_2u_{\eps,2}+\frac14|\partial_2u_{\eps,1}+\partial_1u_{\eps,2}|^2\nonumber ,
\end{align}
which is a coercive second order functional in $\nabla^\fs u_\eps$. We will indeed recall in a rigorous manner that $\cE^\eps_\el u_\eps$ converges to a classical energy of linear elasticity.

\subsubsection{Long-range interaction and super-elasticity}\label{section2.1.2}

In our model, we assume that the medium is interwoven with multiple long fibers or rods. The abundance of rods of length $l$ decreases polynomially with $l$ meaning that long rods are significantly less common than short rods.

Overall, we assume that all fibers have the same stiffness $c$. Considering equations \eqref{eq:nonlin-elast} and \eqref{eq:linearized-elast}, we obtain the following functional for the elastic energy stored in the fibers:
$$\cE_\eps^{V_2}(u_\eps) = \eps^d\sum_{x,y\in Q_\eps} \frac{V_2(x,y,u_\eps(x)-u_\eps(y))}{|x-y|^2}$$

Here, $V_2(x,y,u(x)-u(y))$ represents either $$\Y{\left(u(x)-u(y) \right) \cdot (x-y)}^2$$ or 
$$\Y{\Y{x-y +|x-y|\left(u(x)-u(y)\right)}-\Y{x-y}}^2.$$ However, we will allow for more general forms $V$ below. The essential insight from our analysis is that, for certain distributions of the abundance of long fibers, this part of the energy converges in the $\Gamma$-sense to a functional of the form:
$$\cE^{V_2}(u) = \int_Q \int_Q \frac{V_2(x,y,u_\eps(x)-u_\eps(y))}{|x-y|^{d+2s}}$$

In the scalar context, functionals of this form are well-known and associated with the concept of super-diffusion. Analogously, we henceforth associate this functional with super-elasticity.

Super-diffusion describes a significantly faster spreading of particles compared to diffusion, in a sense that diffusion is driven by Bronwian motion while super-diffusion is driven by Levi flights. In result, super-diffusion even has its own time scale compared to diffusion. 
Similarly, we expect that our limit functional describes the propagation of displacements on a shorter time scale and with a longer range than classical linear elasticity does. This faster propagation of displacements would lead to a decrease in the local gradient of the displacements. It thus stands to reason that the aforementioned exceptional shock absorption properties of the pomelo peel are -- in part -- resulting from a non-locality of its macroscopic elastic properties introduced by the long fibers.

We highlight at this point that our convergence analysis also holds for scalar quantities, i.e. we could equally study a scalar $u_\eps$ with correspondingly modified $V$ and obtain coupled diffusion and super-diffusion models.

\subsection{The analytical model}

We now formulate a more abstract version of the above model, that allows us to study the limit behavior of a larger family of rod and fiber models. 

The functional we consider is given by
\begin{align}
\nonumber
\mathcal{E}_\varepsilon u_\varepsilon :=& \varepsilon^{2d}
\sum_{x \in Q_\varepsilon} \sum_{y \in Q_\varepsilon} c
\varepsilon^{-d-ps-\alpha +\q} \ocweight{\left(\tfrac{x}{\varepsilon}, {\tfrac{y}{\varepsilon}}\right)} \frac{\V}{\Y{x-y}^\q}\\
&+ \varepsilon^{d-\p} \sum_{x\in \Z_\varepsilon^d} \sum_{b\in \B} \Y{ \frac{u_\varepsilon(x+\varepsilon b) - u_\varepsilon(x)}{\Y{b}} \cdot \frac{b}{\Y{b}} }^\p 
- \varepsilon^d \sum_{x\in Q_\varepsilon} f_\varepsilon(x) u_\varepsilon(x),
\label{first energy constant weights}
\end{align} 
where $V : Q\times Q \times \mathbb{R}^d \rightarrow \mathbb{R} $ is convex and continuous in the third argument and satisfies the upper $p$-growth condition:
\begin{align} \label{eq:p_growth}
\forall x,y\in Q ~\exists c_{xy}\in \mathbb{R}:\qquad   0\leq \V \leq c_{xy}\Y{u_\varepsilon(x)-u_\varepsilon(y)}^p.
\end{align}

In order to get a representation of a weighted discrete fractional-Laplace-type term in the first expression of \eqref{first energy constant weights} we reformulate it as
\begin{align}
\nonumber
\mathcal{E}_\varepsilon u_\varepsilon =& \varepsilon^{2d}
\sum_{x \in Q_\varepsilon} \sum_{y \in Q_\varepsilon} \coeff \frac{\Ve}{\Y{x-y}^{d+ps}} \\
&+  \varepsilon^{d-\p} \sum_{x\in \Z_\varepsilon^d} \sum_{b\in \B} \Y{ \frac{u_\varepsilon(x+\varepsilon b) - u_\varepsilon(x)}{\Y{b}} \cdot \frac{b}{\Y{b}} }^\p %
- \varepsilon^d \sum_{x\in Q_\varepsilon} f_\varepsilon(x) u_\varepsilon(x),
\label{l:energy constant weights}
\end{align}
where 
$\coeff:= c \varepsilon^{-\alpha} \ocweight{\left(\tfrac{x}{\varepsilon},\frac{y}{\varepsilon}\right)} \left(\tfrac{\Y{x-y}}{\varepsilon}\right)^{d+ps-\q} $
and we abbreviate it as
\begin{align*}
\Eepsu = \Jepsu + \Eloc - \mathcal{F}_\varepsilon u_\varepsilon,
\end{align*}
where
\begin{align*}
\Jepsu &= \varepsilon^{2d}
\sum_{x \in Q_\varepsilon} \sum_{y \in Q_\varepsilon} \coeff \frac{\Ve}{\Y{x-y}^{d+ps}} , \\
\Eloc  &= \varepsilon^{d-\p} \sum_{x\in \Z_\varepsilon^d} \sum_{b\in \B} \Y{ \frac{u_\varepsilon(x+\varepsilon b) - u_\varepsilon(x)}{\Y{b}} \cdot \frac{b}{\Y{b}} }^\p , \\
\mathcal{F}_\varepsilon u_\varepsilon &= \varepsilon^d \sum_{x\in Q_\varepsilon} f_\varepsilon(x) u_\varepsilon(x).
\end{align*} 
The corresponding limit functional is given by
\begin{align} \label{eq:limit_func}
\mathcal{E}u :=& \bar{c} %
\int_Q \int_Q \frac{\V}{\Y{x-y}^{d+ps}} 
+  \int_Q \sum_{b\in B}\left( \frac{b}{|b|}\cdot \nabla u\frac{b}{|b|} \right)^\p - \int_{Q} u(x) f(x) \\
\nonumber
=& \mathcal{E}^{V}u + \mathcal{E}^{loc}u - \mathcal{F}u .
\end{align}
At first sight, this energy, in particular the representation in \eqref{l:energy constant weights}, looks like the one in \cite{Heida2}.
However, we are in a completely different setting, since we impose different assumptions on our random variables $\ocweight$ and thus on the random weights  $\coeff$.

\begin{assumption} \label{m:assumption}
We assume, that $\ocweight (x,y) : \Z^d \times \Z^d \rightarrow \{0,1 \}$ are i.i.d. Bernoulli random variables with probabilities
\begin{align} \label{eq:defprob}
p_{x,y}^{\alpha, \q} := \mathbb{P}\left(\ocweight (x , y) =1 \right) &= 
\begin{cases}
 \tilde{C} \varepsilon^\alpha \Y{x-y} ^{-d-ps+\q}& ~\text{if}~ \Y{x-y} >0 \\
0& ~\text{else},
\end{cases}
\end{align}
where $\tilde{C}\in [0,1]$.
Furthermore we assume that $0<c\in \R$, $d \in \mathbb{N}$, $s \in (0,1)$, 
$\q \in \R$, $\alpha \in \R^+_0$,
$p\in[1,\frac{2d}{d-2})$ if $d>2$ and $p\in[1,\infty)$ if $d=2$.
All parameters must be set such that
\begin{align} \label{cond:param}
d>ps-\q+\alpha
\end{align}
and
\begin{align} \label{cond:param2}
\q < d+ps .
\end{align}
\end{assumption}

Under these assumptions it holds for all $x,y \in \Z^d$ with $\Y{x-y}>0 $ that
\begin{align}
\mathbb{E}\left[\ocweight \left(x,y \right) \right] =
 \tilde{C} \varepsilon^{\alpha} \Y{x-y}^{-d-ps+\q} <\infty. 
\end{align}

\begin{remark}
Note that condition \eqref{cond:param2} is necessary to ensure that the quantities $p_{x,y}^{\alpha, \q}$ as defined in \eqref{eq:defprob} are bounded by one, and thus are indeed probabilities.

Condition \eqref{cond:param} is central to obtain the convergence estimate in  \eqref{eq:conv_prob}. The condition is sharp in the sense that convergence can not be expected without this assumption: the discrete energy $\Jepsu $ can be viewed as a discrete, random, Riemann-sum type approximation to the double integral defining the non-local energy $\mathcal{E}^{V}u$. The expected number of summands for given $\varepsilon$ is of order 
\begin{align*}
    \under{\# \sum_{x \in Q_\varepsilon} \sum_{y \in Q_\varepsilon}  }{\varepsilon^{-2d}} ~\under{\mathbb{P}\left(\ocweight (x , y) =1 \right)}{ \varepsilon^\alpha \varepsilon ^{d+ps-\q}} \approx \varepsilon^{-d+\alpha +ps -\q}.
\end{align*}
$\Jepsu $ can only approximate an integral if the number of its summands tends to infinity as $\varepsilon \rightarrow 0$. This is the case only if $-d+\alpha +ps-\q <0$, which is equivalent to condition \eqref{cond:param}.
\end{remark}

\begin{remark}
As usual in such discrete conductance models, the event $$\ocweight\left(x,y\right)=1$$ can be seen as a connection or a conductance between the points $x$ and $y$. Although it is not mathematically necessary, it is reasonable in applications to assume symmetry for the random variables, i.e. $\ocweight\left(x,y\right) = \ocweight\left(y,x\right) $, which means that the point $x$ is connected with the point $y$ if and only if $y$ is connected with $x$. Especially in the case of fiber reinforced materials, this assumption captures the property that two points are connected via a fiber. The case where the symmetry assumption is not made creates a model where one-way information flow can occur. That is, it can happen that information flows from $x$ to $y$, but not vice versa. However, the homogenization limit is independent of this symmetry assumption. 

In our model we weight connected points with the factor
$c \varepsilon^{-d-ps-\alpha +\q} \Y{x-y}^{-\q}$, which can be interpreted as the strength of the corresponding connection.
In the standard case $\alpha = \q =0$, the probability turns into 
\begin{align*}
p_{x,y}^{0,0} = \tilde{C} \left( \tfrac{\Y{x-y}}{\varepsilon} \right)^{-d-ps}
\end{align*}
and the weight of the connections is 
\begin{align*}
c \varepsilon^{-d-ps}.
\end{align*}
With the parameters $\alpha$ and $\q$ we can further modify the model in a way that we can change the strength of connected points and simultaneously the probability that two points are connected.
How $\alpha$ and $\q$ affect the model can be seen in the following example, where we choose $d=3$, $p=2$ and $s = \tfrac{1}{2}$. This choice of parameters is standard when modeling the elastic energy of a material.

\begin{enumerate}
\item[(1)] For $\alpha = \q = 0$ the probability is $p_1 = \tilde{C} \left( \frac{\Y{x-y}}{\varepsilon} \right)^{-4}$ and the weights are $w_1= c \varepsilon^{-4} $.
\item[(2)] For $\alpha = 0$, conditions \eqref{cond:param} and \eqref{cond:param2} yield that $\q \in (-2, 4)$. The probability is
\begin{align*}
p_{2} = \tilde{C} \left( \tfrac{\Y{x-y}}{\varepsilon} \right)^{-4+\q} ~~
\begin{cases}
~\geq p_1 \qquad \text{for } \q\geq 0\\
~\leq p_1 \qquad \text{for } \q\leq 0
\end{cases}
\end{align*}
 and the weights are 
\begin{align*}
w_2= c \varepsilon^{-4} \left( \tfrac{\Y{x-y}}{\varepsilon} \right)^{-\q} ~~
 \begin{cases} 
~ \leq w_1 \qquad \text{for } \q\geq 0\\
~ \geq w_1 \qquad \text{for } \q\leq 1 .
 \end{cases}
\end{align*} 
Therefore, these parameters create a model where, depending on the sign of $\q$, we can increase the probability and decrease the weights or vice versa. It should also be mentioned that if $\q \neq 0$, the weights depend on the distance of $x$ and $y$. For example, in the case $\q > 0$, the probability is greater than $p_1$, and the weights are smaller than $w_1$, but unlike $ w_1$, the weights $w_2$ decrease with increasing distance $\Y{x-y}$.  
\item[(3)]For $\q = 0$, condition \eqref{cond:param} yields that $\alpha \in [0,2)$.
The probability is 
\begin{align*}
p_{3} = \tilde{C} \varepsilon^\alpha \left( \tfrac{\Y{x-y}}{\varepsilon} \right)^{-4} \leq p_1
\end{align*}
and the weights are 
\begin{align*}
w_3= c \varepsilon^{-4-\alpha} \geq w_1 .
\end{align*}
Thus, by increasing $\alpha$, we get fewer connections with higher weights.
\end{enumerate}
\end{remark}

\subsection{Main Theorem}

To be able to talk about convergence of a discrete function $u_\varepsilon :Q_\varepsilon \rightarrow \mathbb{R}^d$ to a function $u: Q \rightarrow \mathbb{R}^d$ in a proper sense, we define the operator $\Reps$ mapping discrete functions $u_\varepsilon:Q_\eps\to\R^d$ to piece-wise constant functions $\Repsu : Q \rightarrow \mathbb{R}^d$ by
\begin{align*}
\Repsu (x) := 
u_\varepsilon (z) \qquad\text{if } z\in \mathbb{Z}^d_\varepsilon \text{ and } x \in z+\left(\tfrac{\varepsilon}{2} , \tfrac{\varepsilon}{2}  \right]^d \cap Q .
\end{align*}
This means that the operator $\Repsu $ assigns each point in the $\varepsilon$-cube $z+\left(\frac{\varepsilon}{2} , \frac{\varepsilon}{2}  \right]^d$ the value of $u_\varepsilon$ at the centerpoint $z\in \mathbb{Z}^d_\varepsilon$ (see Figure \ref{fig:Reps1} (bottom)).
The operator $\Reps$ is the adjoint operator of the discretization operator $\mathcal{R}_\varepsilon$ mapping functions $u:Q \rightarrow \R^d$ to discrete functions  $\mathcal{R}_\varepsilon u : Q_\varepsilon \rightarrow \R^d$ by 
\begin{align*}
    \mathcal{R}_\varepsilon u(x) := \dashint_{x+\left(\tfrac{\varepsilon}{2} , \tfrac{\varepsilon}{2}  \right]^d \cap Q } u(y) ~\dif y .
\end{align*}
\begin{remark}\label{remark_1}
From now on we will sometimes rewrite sums over discrete functions into an integral form as follows
\begin{align}\label{eq:integral_identity}
\varepsilon^d \sum_{x \in Q_\varepsilon} u_\varepsilon (x) = \int_Q \Repsu (x)  ~\dif x.
\end{align}
Of course this identity is only valid if $Q = \bigcup_{x \in Q_\varepsilon} [x-\frac{\varepsilon}{2}, x+\frac{\varepsilon}{2}]^d$. For general domains $Q$, this does not hold true (see Figure \ref{fig:Reps1} (top right)). However, since $\lim_{\varepsilon \rightarrow 0} \varepsilon^d \Y{ Q_\varepsilon} = \Y{Q}$, where $\Y{U_\varepsilon} = \sharp \{ U \cap \Z^d_\varepsilon \} $, the error made by using \eqref{eq:integral_identity} vanishes in the limit. 
Therefore, without specifying the error, we will use equation \eqref{eq:integral_identity} for simplicity.
\end{remark}

\begin{figure}
\centering
\includegraphics[width=12cm]{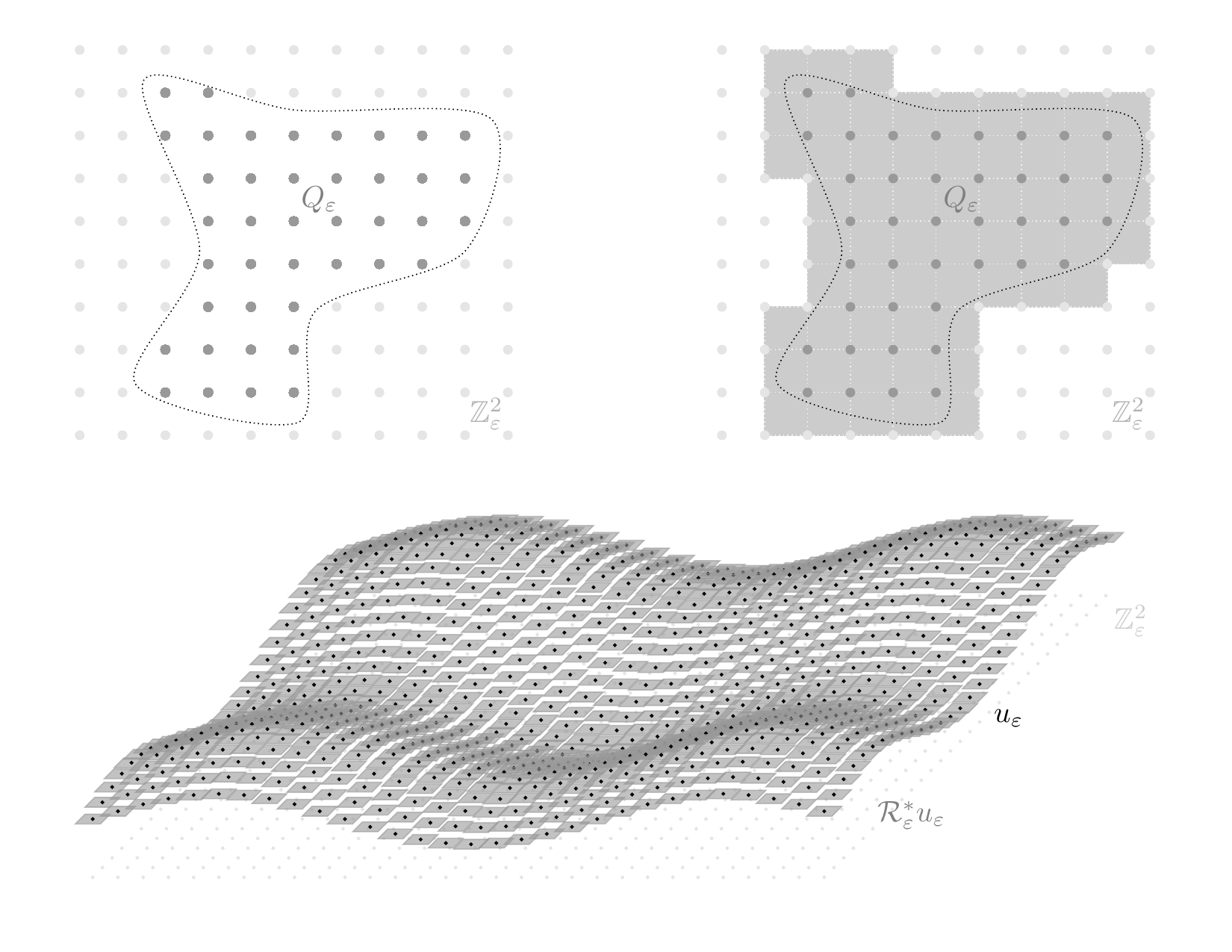}
\caption{TOP: Example of a domain $Q \subset \R^2$, its discretization $Q_\varepsilon = Q \cap \varepsilon \Z^2$ (left) and the difference between $\varepsilon^2 \Y{Q_\varepsilon}$ and $Q$ (right).\\
BOTTOM: The discrete function $u_\varepsilon (x,y)= \frac{1}{10}(\sin(\pi x)+\sin(\pi y))+0.2$ for $\varepsilon = 0.1$ on $Q_\varepsilon =[0,3]^2 \cap \Z_\varepsilon^2$ and its piece-wise constant extension $\Repsu$ to $Q =[0,3]^2$.}
    \label{fig:Reps1}
   \end{figure}

\begin{theorem}\label{th:maintheorem}
Let $Q\subset \R^d$ be a bounded domain with Lipschitz boundary, let $\ocweight , c, d, s, p, \q$ satisfy Assumption \ref{m:assumption}, let $f_\varepsilon : Q_\varepsilon \rightarrow \R^d$ be such that $\Reps f_\varepsilon \rightharpoonup f$ in $L^{2}(Q)^d$. 
Then $\mathcal{E}_\varepsilon$ Mosco-converges to $\mathcal{E}$ in $L^2(Q)^d$ in the following sense:
\begin{enumerate}
\item[(i)] For every sequence $u_\varepsilon : \Z^d_\varepsilon \rightarrow \mathbb{R}^d$ 
with $u_\varepsilon = 0$ on $\Z_\varepsilon^d\setminus Q_\varepsilon$ such that $\Repsu \rightharpoonup u$ weakly in $L^2(Q)^d$,
we have
\begin{align*}
\mathcal{E}u \leq \liminf_{\varepsilon \rightarrow 0 } \mathcal{E}_\varepsilon u_\varepsilon  \qquad \text{a.s.}
\end{align*}
\item[(ii)] For every $u\in L^2(Q)^d$ with $u= 0$ on $\R^d \setminus Q$, there is a sequence $u_\varepsilon$ with $u_\varepsilon = 0$ on $\Z_\varepsilon^d\setminus Q_\varepsilon$ such that $\Repsu \rightarrow u$ strongly in $L^2(Q)^d$ %
and 
\begin{align*}
\mathcal{E}u \geq \limsup_{\varepsilon \rightarrow 0} \mathcal{E}_\varepsilon u_\varepsilon \qquad \text{a.s.}
\end{align*}
\end{enumerate}
\end{theorem}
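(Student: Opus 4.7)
The plan is to decompose the functional as $\mathcal{E}_\varepsilon u_\varepsilon = \mathcal{J}_\varepsilon u_\varepsilon + \mathcal{E}^{loc}_\varepsilon u_\varepsilon - \mathcal{F}_\varepsilon u_\varepsilon$ and treat each piece separately, since the three summands are of fundamentally different nature: a non-local random term, a local deterministic second-order term, and a linear load term. For the liminf direction (i) one may restrict to sequences with bounded energy (otherwise the statement is vacuous); such a bound on $\mathcal{E}^{loc}_\varepsilon u_\varepsilon$ combined with a discrete Korn inequality yields uniform control of a discrete symmetric gradient, and passing to a piecewise affine interpolation together with Rellich's theorem upgrades the assumed weak convergence $\mathcal{R}^*_\varepsilon u_\varepsilon \rightharpoonup u$ to strong convergence in $L^2(Q)^d$.

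With strong $L^2$ compactness in hand, the force term $\mathcal{F}_\varepsilon u_\varepsilon = \int_Q \mathcal{R}^*_\varepsilon f_\varepsilon \cdot \mathcal{R}^*_\varepsilon u_\varepsilon$ converges to $\int_Q f\cdot u$ by the weak-strong pairing. The local elastic piece $\mathcal{E}^{loc}_\varepsilon$ is a standard discrete-to-continuum second-order functional in the symmetric gradient, and its $\Gamma$-liminf/recovery pair with respect to $L^2$ convergence of $\mathcal{R}^*_\varepsilon$-extensions is classical along the lines of the Alicandro--Cicalese / Braides framework cited in the introduction, producing the limit $\int_Q \sum_{b\in B}|\tfrac{b}{|b|}\cdot\nabla u\tfrac{b}{|b|}|^p$; for the recovery sequence one chooses $u_\varepsilon=\mathcal{R}_\varepsilon u$ for smooth $u$ and relies on the density of $C_c^\infty(Q)^d$ in the relevant energy space to diagonalize.

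The substantial work lies with the non-local piece $\mathcal{J}_\varepsilon u_\varepsilon$. I would start from the key computation that, thanks to the scaling prescribed in Assumption \ref{m:assumption}, the expectation of each random coefficient is a single constant:
\begin{align*}
\mathbb{E}[\sigma_{\varepsilon,x,y}]
= c\,\varepsilon^{-\alpha}\,\tilde C\,\varepsilon^{\alpha}\bigl(\tfrac{|x-y|}{\varepsilon}\bigr)^{-d-ps+\q}\bigl(\tfrac{|x-y|}{\varepsilon}\bigr)^{d+ps-\q} = c\tilde C =:\bar c,
\end{align*}
so that $\mathbb{E}[\mathcal{J}_\varepsilon u_\varepsilon]$ is precisely the deterministic Riemann sum approximating $\mathcal{E}^V u$. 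Because the random field is not stationary, ergodic arguments are unavailable; instead I would control the fluctuations $\mathcal{J}_\varepsilon u_\varepsilon-\mathbb{E}[\mathcal{J}_\varepsilon u_\varepsilon]$ by a direct variance estimate. Using independence of the Bernoulli variables $\omega(x/\varepsilon,y/\varepsilon)$ and $\mathbb{E}[\omega^2]=\mathbb{E}[\omega]$, the variance reduces to a single sum whose scaling can be matched against the uniform $L^{p}$-bound on $u_\varepsilon$; condition \eqref{cond:param} is precisely what is needed for this sum to vanish as $\varepsilon\to 0$. Chebyshev's inequality then yields convergence in probability, and a Borel--Cantelli/subsequence diagonal argument upgrades this to almost sure convergence along an $\varepsilon_k\to 0$. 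The deterministic limit $\mathbb{E}[\mathcal{J}_\varepsilon u_\varepsilon]\to \mathcal{E}^V u$ follows from the upper $p$-growth condition \eqref{eq:p_growth}, continuity of $V$ in the last variable, the strong $L^2$ convergence of $\mathcal{R}^*_\varepsilon u_\varepsilon$, and convexity of $V$ (for the liminf direction, via Fatou and lower semicontinuity of double-integral functionals of this type).

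The main obstacle will be the interaction between the random and the variational limits in step (i): the variance estimate is cheap for a fixed test function but the liminf must hold for an arbitrary admissible sequence $u_\varepsilon$. The standard remedy is to apply Chebyshev along a countable dense family of candidate limits (e.g.\ smooth compactly supported functions), take a single almost-sure event on which convergence holds for the whole family, and then transfer the bound to a general $u_\varepsilon$ by first relating it to a truncated/mollified comparison function and exploiting convexity of $V$ and the uniform $L^p$-bound to absorb the error. The recovery sequence in (ii) is then a short epilogue: take $u_\varepsilon=\mathcal{R}_\varepsilon u$ for $u\in C_c^\infty(Q)^d$, verify convergence of each of the three terms (the non-local one again via the variance estimate), and diagonalize in a density argument to cover arbitrary $u\in L^2(Q)^d$ with $u=0$ on $\mathbb{R}^d\setminus Q$.
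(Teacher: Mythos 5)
Your overall architecture matches the paper's: the same three-term decomposition, Korn plus discrete Poincar\'e compactness to upgrade weak to strong $L^2$ (and $L^p$) convergence along a minimizing subsequence, a weak--strong pairing for the force term, the Neukamm--Varga/Braides-type $\Gamma$-convergence for the local elastic term, a Chebyshev variance estimate exploiting condition \eqref{cond:param} for the random non-local term, and a density/diagonalization argument with $u_\varepsilon=\mathcal{R}_\varepsilon u_K$ for the recovery sequence. The computation $\mathbb{E}[\sigma_{\varepsilon,x,y}]=c\tilde C$ is exactly the paper's starting point as well.

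The genuine gap is in how you deploy Chebyshev for part (i). You apply it to the random variable $\mathcal{E}^V_\varepsilon u_\varepsilon$ itself, which forces you to fix the sequence $u_\varepsilon$ in advance; you correctly flag that the exceptional null set then depends on the sequence, but the proposed remedy --- a countable dense family of candidate limits plus a ``transfer'' to general $u_\varepsilon$ via convexity and the $L^p$-bound --- does not close. The error incurred when replacing $u_\varepsilon$ by a nearby comparison function is itself a random non-local energy of the form $\varepsilon^{2d}\sum_{x,y}\sigma_{\varepsilon,x,y}|x-y|^{-d-ps}|w_\varepsilon(x)-w_\varepsilon(y)|^p$, and bounding this uniformly in the realization requires precisely the kind of control over the coefficient field that you have not yet established; the argument is circular as stated. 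The paper resolves this by applying Chebyshev not to the energy but to the coefficient averages $\dashint_U\dashint_J\mathcal{R}^*_\varepsilon\sigma_{\varepsilon,x,y}$ over open sets $U,J\subset Q$ (Theorem \ref{m:th:konvMittelw}), a statement about the random field alone, independent of $u_\varepsilon$; the passage to an arbitrary sequence is then entirely deterministic via Lemma \ref{m:l:konvMittelInt} (an Egorov/Lusin argument for products of a weakly averaging nonnegative weight with a pointwise convergent bounded factor). Relatedly, that deterministic lemma needs a \emph{bounded} second factor, which is why the paper truncates both the displacement (replacing $u$ by $u^k$) and the singular kernel (replacing $|x-y|^{d+ps}$ by $g_\gamma$) before passing to the limit and only afterwards sends $k\to\infty$, $\gamma\to0$ by monotonicity; your sketch ignores the singularity of the kernel at $x=y$ and the unboundedness of $V(x,y,u(x)-u(y))$, so the Fatou/Egorov step as written does not apply. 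With these two repairs --- randomizing only over coefficient averages and inserting the truncations --- your proof becomes the paper's.
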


The difficulty in the proof of Theorem \ref{th:maintheorem} is to show that one can pass to the limit in the non-local term $\Jepsu$.
In contrast to \cite{Heida2} we cannot use ergodic theorems: 
The field $(x,y)\to\varpi(x,y)$ is not jointly stationary in $(x,y)$ but only in $x$. 
This can be seen from the spatial distribution: while $\varpi(x,y)$ has a high density of the value $1$ close to the axis $x=y$, the density of the values $1$ decreases polynomially with increasing $|x-y|$. 
This on the other hand contradicts stationarity, a property that basically states that the distribution of the value $1$ is invariant with respect to shifts in the $(x,y)$-plane. 

In order to compensate for this lack of stationarity, we will resort to Tschebyscheff's theorem instead. This result is well known in probability theory and widely used in the large-deviation community. We will use it to average over all $\varpi(x,y)$, 
observing that this average converges 
polynomially fast to the expectation of $\varpi(x,x-y)$.

\section{Preliminaries}

\subsection{Inequalities} \label{sec3.1}

We will need the following inequalities
\begin{theorem}[Discrete Korn's inequality]\label{thm:Korn}
    There exists a constant $C<\infty$ such that for every  $u_\eps:\, \Z^d_\eps\to \mathbb{R}^d$ with $u_\varepsilon = 0$ on $\Z_\varepsilon^d\setminus Q_\varepsilon$ 
    it holds
    \begin{equation}\label{eq:Korn}
    \eps^d\sum_{x\in Q_\eps}\sum_{i=1}^d \eps^{-2}\Y{u_\eps(x+\eps e_i)-u_\eps(x)}^2\leq C \Eloc .        
    \end{equation}
\end{theorem}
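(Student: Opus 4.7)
My plan is to reduce the discrete Korn inequality to the continuum Friedrichs--Korn inequality $\|\nabla v\|_{L^2}^2\le 2\|e(v)\|_{L^2}^2$ valid for $v\in H^1_0$. Given $u_\eps:\Z^d_\eps\to\R^d$ with $u_\eps = 0$ on $\Z_\eps^d\setminus Q_\eps$, I introduce its continuous piecewise-affine Lagrange interpolant $\tilde u_\eps$ on a Kuhn-type simplicial subdivision of the $\eps$-cubes. Because of the discrete Dirichlet condition, $\tilde u_\eps$ extends by zero to a function in $H^1_0(Q';\R^d)$ for some bounded Lipschitz $Q'\supset Q$, so that Friedrichs--Korn applies directly to $\tilde u_\eps$.

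\textbf{Matching the two sides.} On each Kuhn simplex $\nabla\tilde u_\eps$ is a constant matrix given by a signed combination of the $d$ nearest-neighbour forward differences $\eps^{-1}(u_\eps(x+\eps e_i)-u_\eps(x))$ at the appropriate vertex. A direct count over the $d!$ simplices in each cube yields dimensional constants $c_1,c_2>0$ with
\[
c_1\,\eps^{d-2}\sum_{x\in Q_\eps}\sum_{i=1}^d|u_\eps(x+\eps e_i)-u_\eps(x)|^2 \;\le\; \|\nabla\tilde u_\eps\|_{L^2}^2 \;\le\; c_2\,\eps^{d-2}\sum_{x\in Q_\eps}\sum_{i=1}^d|u_\eps(x+\eps e_i)-u_\eps(x)|^2,
\]
so the left-hand side of \eqref{eq:Korn} is equivalent to $\|\nabla\tilde u_\eps\|_{L^2}^2$. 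Applying Friedrichs--Korn then reduces the task to the bound $\|e(\tilde u_\eps)\|_{L^2}^2\le C\,\Eloc$.

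\textbf{Bounding the symmetric gradient by the discrete energy.} On each simplex $e(\tilde u_\eps)$ is likewise a constant matrix, and evaluating the continuum density $\sum_{b\in B}\bigl(\tfrac{b}{|b|}\cdot e(\tilde u_\eps)\,\tfrac{b}{|b|}\bigr)^2$ produces a coercive positive-definite quadratic form in $e(\tilde u_\eps)$, since the antisymmetric part of any matrix $M$ is annihilated by $M\mapsto b^T M b$. This is exactly the computation behind \eqref{eq:elastic-concept}, and it generalises to all $d$. To compare this continuum density with the summands of $\Eloc$ I decompose each diagonal difference $u_\eps(x+\eps b)-u_\eps(x)$ with $b=\pm e_i\pm e_j$ telescopically into two axis-aligned nearest-neighbour differences living on adjacent cubes, then re-index the $x$-summation and apply Young's inequality with $\eps$-uniform constants. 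Combining this bound with Friedrichs--Korn and Step 2 yields \eqref{eq:Korn}.

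\textbf{Main obstacle.} The only real difficulty lies in the last step: whereas $e(\tilde u_\eps)$ on a given simplex is built from axis differences anchored at a single vertex, $\Eloc$ also contains diagonal contributions that span two neighbouring cubes. Rewriting these diagonals as telescoping axis sums forces a careful re-indexing of the $x$-summation and the use of Young's inequality to absorb cross terms with $\eps$-independent constants, and this bookkeeping is where the dimensional constant $C$ in \eqref{eq:Korn} ultimately comes from.
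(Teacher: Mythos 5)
Your overall architecture (piecewise-affine interpolation, equivalence of the discrete difference-quotient sum with $\|\nabla\tilde u_\eps\|_{L^2}^2$, and the continuum identity $\|\nabla v\|_{L^2}^2\le 2\|e(v)\|_{L^2}^2$ for $v\in H^1_0$) is a legitimate alternative route. The paper instead stays entirely discrete: the diagonal entries $u_i(x+\eps e_i)-u_i(x)$ are read off directly from $\Eloc$; the \emph{same-anchor} symmetrized off-diagonal differences $a_{ij}(x)=\tfrac12\bigl[(u_i(x+\eps e_j)-u_i(x))+(u_j(x+\eps e_i)-u_j(x))\bigr]$ are controlled through the bond $b=e_i+e_j$ together with the elementary inequality $x^2\le 2(x+y)^2+2y^2$; and the passage from the symmetrized matrix $A$ to the full difference matrix $B$ is a discrete summation-by-parts identity $\sum_x A:A^\top\ge\tfrac12\sum_x|B|^2$, i.e. the discrete analogue of the continuum Korn identity you invoke.

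However, your Step 3 has a genuine gap. On a Kuhn simplex the $d$ columns of $\nabla\tilde u_\eps$ are the chain differences $\eps^{-1}(u_\eps(z_m)-u_\eps(z_{m-1}))$ with $z_m=z_{m-1}+\eps e_{\sigma(m)}$, so they are \emph{not} anchored at a single vertex: the off-diagonal entry of $e(\tilde u_\eps)|_T$ for $i=\sigma(k)$, $j=\sigma(l)$, $k<l$, symmetrizes a $j$-component increment anchored at $z_{k-1}$ with an $i$-component increment anchored at $z_{l-1}$. Telescoping the bond $b=e_i+e_j$ yields the symmetrized combination at the \emph{adjacent} anchors $y$ and $y+\eps e_i$, which matches your simplex quantity only when $l=k+1$; in particular your mechanism does close the argument for $d=2$. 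For $l>k+1$ the leftover after Young's inequality is a difference of unsymmetrized off-diagonal increments $u_i(\cdot+\eps e_j)-u_i(\cdot)$ at anchors separated by $z_{l-1}-z_k$, and these are precisely the quantities that $\Eloc$ does \emph{not} control pointwise (if it did, no Korn-type argument would be needed at all), so they cannot be absorbed with $\eps$-uniform constants. The gap is repairable — for instance by also exploiting the longer bonds $b=e_{\sigma(k)}+\dots+e_{\sigma(l)}\in \B$ and eliminating over sub-chains by induction on $l-k$, or more simply by replacing the interpolation and continuum Korn steps with the paper's same-anchor symmetrization followed by the discrete summation-by-parts identity — but as written the argument is only complete for $d=2$, while the theorem (and the motivating example $d=3$) requires general $d$.
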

\begin{proof}
    Even though the inequality is well known, let us repeat the main argument following the concept of \eqref{eq:elastic-concept}: As $(u(x+\eps e_i)-u(x))\cdot e_i = u_i(x+\eps e_i)-u_i(x)$, an estimate 
    $$\eps^d\sum_{x\in Q_\eps}\sum_{i=1}^d \eps^{-2}\Y{u_{\eps,i}(x+\eps e_i)-u_{\eps,i}(x)}^2\leq C \Eloc
    $$
    is evident. Next, we recall that $2(x+y)^2+2y^2\geq x^2$ and hence we infer from 
    \begin{align*}
        (u(x+&\eps e_i+\eps e_j)-u(x))\cdot (e_i+e_j) \\
        &= (u_i(x+\eps e_j)-u_i(x)) +(u_i(x+\eps e_i +\eps e_j)-u_i(x+\eps e_j))\\
        &\quad +(u_j(x+\eps e_i)-u_j(x)) +(u_j(x+\eps e_j +\eps e_i)-u_j(x+\eps e_i))
    \end{align*}
    with the choices
    \begin{align*}
        x &= (u_i(x+\eps e_j)-u_i(x))+ (u_j(x+\eps e_i)-u_j(x)) \\
        y &= (u_i(x+\eps e_i +\eps e_j)-u_i(x+\eps e_j)) + (u_j(x+\eps e_j +\eps e_i)-u_j(x+\eps e_i))\\
        x+y &= (u(x+\eps e_i+\eps e_j)-u(x))\cdot (e_i+e_j)
    \end{align*}
    a bound on 
    $$\eps^d\sum_{x\in Q_\eps}\sum_{i,j=1}^d \eps^{-2}A(x):A^\top(x)=\eps^d\sum_{x\in Q_\eps}\sum_{i,j=1}^d \eps^{-2}\Y{a_{ij}(x)}^2\leq C \Eloc\, ,
    $$
    where $B(x)=(b_{ij}(x))_{ij}=(u_i(x+\eps e_j)-u_i(x))$ we define $A=(a_{ij})_{ij}=\frac12(B+B^\top)$. 

    We drop the $\eps$ and make use of $v_j^i(x):=u_j(x+e_i)-u_j(x)$ and write
    \begin{align*}
        \sum_{x\in \Z^d_\eps} &A(x):A^\top(x) = \sum_{x\in \Z^d_\eps} A(x):B(x) \\
        &= \frac12\sum_{x\in \Z^d_\eps}\sum_{i,j=1}^d\left((u_i(x+e_j)-u_i(x))^2 + v_j^i(x)(u_i(x+e_j)-u_i(x))\right)\\
        &= \frac12\sum_{x\in \Z^d_\eps}\sum_{i,j=1}^d(v_i^j)^2 + \frac12\sum_{x\in \Z^d_\eps}\sum_{i=1}^d u_i(x) \sum_{j=1}^d v_j^i(x-e_j)-v_j^i(x) \\
        &= \frac12\sum_{x\in \Z^d_\eps}\sum_{i,j=1}^d(v_i^j)^2 + \frac12\sum_{x\in \Z^d_\eps}\sum_{i=1}^d (u_i(x-e_i)-u_i(x)) \sum_{j=1}^d (u_j(x-e_j)-u_j(x)) \\
        &\geq \frac12\sum_{x\in \Z^d_\eps}\sum_{i,j=1}^d(u_i(x+e_j)-u_i(x))^2\,.
    \end{align*}
    Combining the above estimates results in \eqref{eq:Korn}.
\end{proof}

\begin{theorem}[Discrete Poincar\'e's inequality and compactness] \label{thm:Poincare}
    There exists a constant $C<\infty$ such that for every $\eps>0$, every $p\in[1,\frac{2d}{d-2})$ if $d>2$ or $p\in[1,\infty)$ if $d=2$ and for every  $u_\eps:\, \Z^d_\eps\to \mathbb{R}^d$ with $u_\eps(x)=0$ for $x\not\in Q$ it holds
    \begin{equation}\label{eq:Korn2}
    \left(\eps^d\sum_{x\in Q_\eps}|u_\eps(x)|^p\right)^\frac{1}{p}\leq
    C \left(\eps^d\sum_{x\in \Z^d_\eps}\sum_{i=1}^d \eps^{-2}\Y{u_\eps(x+\eps e_i)-u_\eps(x)}^2 \right)^\frac{1}{2} .     
    \end{equation}
    Furthermore, for every sequence $u_\eps$ where $\eps\to0$ and where 
    $$\sup_\eps\left(\eps^d\sum_{x\in \Z^d_\eps}\sum_{i=1}^d \eps^{-2}\Y{u_\eps(x+\eps e_i)-u_\eps(x)}^2 \right)^\frac{1}{2} <\infty$$ there exists a subsequence $u_{\eps'}$ and $u\in L^p(Q)^d$ such that $\mathcal{R}_{\eps'}^\ast u_{\eps'}\to u$ strongly in $L^p(Q)^d$ as $\eps'\to0$.
\end{theorem}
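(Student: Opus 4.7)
The plan is to reduce both claims to classical continuum results via a piecewise-affine interpolation of the discrete data. I introduce the multilinear $Q_1$ interpolant $\tilde u_\eps : \R^d \to \R^d$ that matches $u_\eps$ at every node of $\Z^d_\eps$ and is determined on each cube $z + [0,\eps]^d$ by the $2^d$ nodal values. Since $u_\eps \equiv 0$ off $Q_\eps$ and $\partial Q$ is Lipschitz, $\tilde u_\eps$ is supported in a fixed bounded enlargement $Q_1 \supset Q$ for all sufficiently small $\eps$.

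The first step is the key gradient estimate: on each cube, $\partial_i \tilde u_\eps$ is a convex combination of the forward differences $\eps^{-1}(u_\eps(z+\eps e_i+\eps\eta) - u_\eps(z+\eps\eta))$ with $\eta \in \{0,1\}^d$, $\eta_i = 0$, so summing cube-by-cube yields
\[
\|\nabla \tilde u_\eps\|_{L^2(\R^d)}^2 \leq C\, \eps^d \sum_{z \in \Z^d_\eps} \sum_{i=1}^d \eps^{-2}|u_\eps(z+\eps e_i) - u_\eps(z)|^2 .
\]
Combined with the classical Sobolev embedding $H^1_0(Q_1) \hookrightarrow L^p(Q_1)$ (valid for $p \leq 2^* = \tfrac{2d}{d-2}$ if $d>2$, and all $p<\infty$ if $d=2$), this gives $\|\tilde u_\eps\|_{L^p}$ bounded by the right-hand side of \eqref{eq:Korn2}. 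It remains to pass from $\tilde u_\eps$ to $\Reps u_\eps$: on the cell $z + (-\eps/2,\eps/2]^d$ we have $\Reps u_\eps(x) = u_\eps(z) = \tilde u_\eps(z)$, so $|\Reps u_\eps(x) - \tilde u_\eps(x)| = |\tilde u_\eps(x) - \tilde u_\eps(z)|$, which, after Jensen and a change of variables along the segment from $z$ to $x$ and a routine Hölder/Sobolev interchange between $L^p$ and $L^2$ on the bounded support, leads to
\[
\|\Reps u_\eps - \tilde u_\eps\|_{L^p(Q)} \leq C\, \eps \left(\eps^d \sum_{x \in \Z^d_\eps}\sum_{i=1}^d \eps^{-2} |u_\eps(x+\eps e_i)-u_\eps(x)|^2\right)^{1/2}.
\]
Adding the two displayed estimates proves \eqref{eq:Korn2}.

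For the compactness statement, the hypothesis uniformly bounds the $H^1$-norm of $\tilde u_\eps$, all of which are supported in the fixed bounded Lipschitz set $Q_1$. The Rellich-Kondrachov theorem then extracts a subsequence $\tilde u_{\eps'} \to u$ strongly in $L^p(Q_1)^d$ for every admissible $p$. The comparison estimate above forces $\|\Reps u_{\eps'} - \tilde u_{\eps'}\|_{L^p} \to 0$, hence $\Reps u_{\eps'} \to u$ strongly in $L^p(Q)^d$, with $u$ supported in $\overline Q$. The main technical nuisance is the bookkeeping for boundary cells that $\partial Q$ cuts and for nodes just outside $Q_\eps$ whose interpolation cubes intersect $Q$; the Lipschitz regularity of $\partial Q$ controls the aggregate volume of all such problematic cells by $O(\eps)$, so their contribution to every norm above is absorbed in the error terms.
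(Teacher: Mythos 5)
Your route is genuinely different from the paper's: the paper does not prove this theorem at all, but simply cites Bessemoulin-Chatard et al.\ (Theorem 6, Section 4.2) for the discrete Sobolev--Poincar\'e inequality and Droniou et al.\ (Lemma B.19) for the compactness. A self-contained argument via the $Q_1$ interpolant, the continuum embedding $H^1_0\hookrightarrow L^p$, and Rellich--Kondrachov is a perfectly reasonable alternative, and your gradient estimate $\|\nabla\tilde u_\eps\|_{L^2}^2\leq C\,\eps^d\sum_z\sum_i\eps^{-2}\Y{u_\eps(z+\eps e_i)-u_\eps(z)}^2$ is correct (each $\partial_i\tilde u_\eps$ is a convex combination of edge differences, and each edge is counted boundedly often).

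However, your comparison estimate $\|\Reps u_\eps-\tilde u_\eps\|_{L^p}\leq C\eps\,(\cdot)^{1/2}$ is false for $p>2$, and the justification ``H\"older/Sobolev interchange between $L^p$ and $L^2$'' goes the wrong way there: on a bounded domain H\"older bounds the $L^2$ norm by the $L^p$ norm when $p>2$, not the reverse. A single spike $u_\eps(z_0)=1$, $u_\eps=0$ elsewhere, shows the failure: the left side is of order $\eps^{d/p}$ while your right side is of order $\eps\cdot\eps^{d/2-1}=\eps^{d/2}$, and $\eps^{d/p}\gg\eps^{d/2}$ for $p>2$. The correct statement, obtained by combining the cell-wise Poincar\'e bound $\|\tilde u_\eps-\tilde u_\eps(z)\|_{L^p(\text{cell})}\leq C\eps\|\nabla\tilde u_\eps\|_{L^p(\text{cell})}$ with the inverse estimate $\|\nabla\tilde u_\eps\|_{L^p(\text{cell})}\leq C\eps^{d/p-d/2}\|\nabla\tilde u_\eps\|_{L^2(\text{cell})}$ on the finite-dimensional local polynomial space and then summing with $\ell^2\hookrightarrow\ell^p$, is
\begin{equation*}
\left\|\Reps u_\eps-\tilde u_\eps\right\|_{L^p}\leq C\,\eps^{1+\frac{d}{p}-\frac{d}{2}}\left(\eps^d\sum_{x\in\Z^d_\eps}\sum_{i=1}^d\eps^{-2}\Y{u_\eps(x+\eps e_i)-u_\eps(x)}^2\right)^{\frac12},
\end{equation*}
and the exponent $1+\tfrac{d}{p}-\tfrac{d}{2}$ is positive precisely when $p<\tfrac{2d}{d-2}$ (and always for $d=2$, $p<\infty$). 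Since all you need is that this error term is controlled by the right-hand side of \eqref{eq:Korn2} and vanishes as $\eps\to0$ along the subsequence, your architecture survives once this step is repaired; as written, though, the estimate and its justification are a genuine gap, and it is worth noting that the subcritical restriction on $p$ in the theorem is exactly what makes the repaired exponent positive.
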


\begin{proof}
    For the Poincar\'e-inequality refer to \cite{bessemoulin2015discrete} Theorem 6 Section 4.2, while for the compactness refer to \cite{droniou2018gradient}, Lemma B.19.
\end{proof}

The following well known Theorem by P. L. Tschebyscheff is taylored to our coefficient field and replaces the ergodicity assumption in \cite{Heida2}.

\begin{theorem}[Tschebyscheff]\label{th:tschebyscheff}
Let $X$ be a random variable with bounded variance. Then for any $\delta >0$ the following estimates holds true
\begin{align}\label{tschebyscheff0}
\mathbb{P} \left( \Y{X- \E [X]} \geq a \right) \leq \frac{\var{X}}{a^2},
\end{align}
\begin{align}\label{tschebyscheff}
\mathbb{P} \left( \Y{X- \E [X]} < a \right) \geq  1- \frac{\var{X}}{a^2}.
\end{align}
\end{theorem}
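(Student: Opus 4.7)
The plan is to derive both inequalities from the classical Markov inequality applied to a suitable non-negative transformation of $X-\E[X]$. The two statements are equivalent via complementation, so the essential work lies in establishing \eqref{tschebyscheff0}; then \eqref{tschebyscheff} will follow immediately by writing $\{|X-\E[X]|<a\}$ as the complement of $\{|X-\E[X]|\geq a\}$ and using $\mathbb{P}(A^c)=1-\mathbb{P}(A)$.

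To obtain \eqref{tschebyscheff0}, I would first record Markov's inequality in the form: for any non-negative random variable $Y$ and any $b>0$,
\begin{equation*}
\mathbb{P}(Y\geq b)\leq \frac{\E[Y]}{b}\,,
\end{equation*}
which is proved in one line from $b\,\mathbf{1}_{\{Y\geq b\}}\leq Y\,\mathbf{1}_{\{Y\geq b\}}\leq Y$ by taking expectations. I would then apply this with the non-negative random variable $Y:=(X-\E[X])^2$ and the threshold $b:=a^2$, which is legitimate because $a>0$ ensures $b>0$.

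The key observation is the set identity
\begin{equation*}
\{|X-\E[X]|\geq a\}=\{(X-\E[X])^2\geq a^2\}\,,
\end{equation*}
valid since $t\mapsto t^2$ is monotone on $[0,\infty)$ and $a>0$. Combining this with Markov's inequality and the definition $\var{X}=\E[(X-\E[X])^2]$ yields
\begin{equation*}
\mathbb{P}(|X-\E[X]|\geq a)=\mathbb{P}((X-\E[X])^2\geq a^2)\leq \frac{\E[(X-\E[X])^2]}{a^2}=\frac{\var{X}}{a^2}\,,
\end{equation*}
which is exactly \eqref{tschebyscheff0}. The finiteness assumption on the variance ensures the right-hand side is a meaningful real number (otherwise the bound is vacuous but still correct).

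Finally, \eqref{tschebyscheff} is obtained by taking the complementary event: since $\{|X-\E[X]|<a\}$ and $\{|X-\E[X]|\geq a\}$ partition the sample space, we have $\mathbb{P}(|X-\E[X]|<a)=1-\mathbb{P}(|X-\E[X]|\geq a)\geq 1-\var{X}/a^2$. There is no real obstacle here; the only subtlety worth flagging is the strict-versus-weak inequality bookkeeping in the set identity, which is handled by the monotonicity of squaring together with $a>0$.
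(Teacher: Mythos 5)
Your proof is correct: it is the classical argument via Markov's inequality applied to $(X-\E[X])^2$ with threshold $a^2$, followed by complementation to pass from \eqref{tschebyscheff0} to \eqref{tschebyscheff}. The paper itself states this result without proof, citing it as well known, so there is nothing to compare against; your writeup is the standard and complete derivation. One small remark: the theorem as stated in the paper quantifies over ``$\delta>0$'' but the bounds are written in terms of $a$ --- you correctly treated $a>0$ as the relevant parameter, which is what the paper intends.
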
 

\begin{lemma}[An auxiliary $\delta$-inequality]\label{lem:delta-lemma}
    Let $0<A,B<\infty$. There exist $0<c_1,c_2<\infty$ depending only on $A$ and $B$ such that for every $\delta\in(0,AB)$ and every $a\in(0,A)$, $b\in(0,B)$ with $AB-ab<\delta$ it holds
    $$A-a<c_1\delta\,,\qquad B-b<c_2\delta\,.$$
\end{lemma}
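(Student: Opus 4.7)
The hypothesis $AB-ab<\delta$ is equivalent to $ab>AB-\delta$, and my plan is to combine this with the two separate one-sided bounds $a<A$ and $b<B$ to pin $a$ close to $A$ and $b$ close to $B$ independently.

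More concretely, first I would use $a<A$ to write
\begin{equation*}
Ab \;\geq\; ab \;>\; AB-\delta,
\end{equation*}
divide by $A>0$, and obtain $b>B-\delta/A$, i.e.\ $B-b<\delta/A$. Symmetrically, using $b<B$ together with $ab>AB-\delta$ gives $aB>AB-\delta$ and hence $A-a<\delta/B$. This yields the claim with the explicit constants $c_1:=1/B$ and $c_2:=1/A$, which depend only on $A$ and $B$ as required. The restriction $\delta\in(0,AB)$ is only used implicitly to guarantee that the right-hand sides $B-\delta/A$ and $A-\delta/B$ are nonnegative (so that the bounds on $a,b$ are consistent with $a,b>0$), but the inequalities themselves hold regardless.

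There is no real obstacle here; the only subtlety is recognising that one should \emph{not} try to control $A-a$ and $B-b$ simultaneously through a joint estimate (which would be false in general, since for instance $a$ close to $A$ forces no constraint on $b$ by itself), but instead exploit the trivial upper bounds $a<A$ and $b<B$ one at a time to decouple the two factors in the product $ab$. The result will be applied in the sequel with $\delta$ playing the role of a small deviation between the actual energy value and a target, so the linear dependence of the estimates on $\delta$ is exactly what is needed.
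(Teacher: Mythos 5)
Your proof is correct, and it is essentially the same elementary argument as the paper's: the paper normalizes to $A=B=1$ and bounds each complement $1-a$, $1-b$ by $\delta$ via the identity $1-ab=(1-a)+(1-b)-(1-a)(1-b)$, which after undoing the normalization gives exactly your constants $c_1=1/B$, $c_2=1/A$. No further comment is needed.
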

\begin{proof}
    Without loss of generality let $A=B=1$ and $a^\complement=1-a$, $b^\complement=1-b$. It holds:
    \begin{align*}
        \delta>AB-ab=1-ab & =a^\complement b^\complement+a^\complement(1-b^\complement)+(1-a^\complement)b^\complement\\
        & = a^\complement+b^\complement-a^\complement b^\complement
    \end{align*}
    From here we conclude that $a^\complement,b^\complement<\delta$.
\end{proof}

\subsection{Convergence results}

\begin{theorem} 
\label{th:gamma_konv_laplace}
Let $Q\subset \mathbb{R}^d$ be a bounded domain. Then $\cE^{loc}_\eps$ $\Gamma$-converges to $\cE^{loc}$ in the following sense:
\begin{itemize}
    \item[(i)] For every sequence $\eps\to0$ and every sequence $u_\eps:\Z^d_\eps\to\R^d$
    with $u_\varepsilon = 0$ on $\Z_\varepsilon^d\setminus Q_\varepsilon$ such that $\sup_\eps\cE^{loc}_\eps(u_\eps)<\infty$ there exists a subsequence $\eps'$ and $u\in L^2(Q)^d$ such that $\mathcal{R}_{\eps'}^\ast u_{\eps'}\to u$ strongly in $L^2(Q)^d$ and 
    $$\liminf_{\eps'\to0}\cE^{loc}_{\eps'}(u_{\eps'})\geq\cE^{loc}(u)\,.$$
    \item[(ii)] For every $u\in H^1_0(Q)^d$ there exists a sequence $u_\eps$ with $u_\varepsilon = 0$ on $\Z_\varepsilon^d\setminus Q_\varepsilon$ such that $\mathcal{R}_{\eps}^\ast u_{\eps}\to u$ strongly in $L^2(Q)^d$ and 
    $$\limsup_{\eps\to0}\cE^{loc}_{\eps}(u_{\eps})\leq\cE^{loc}(u)\,.$$ 
\end{itemize}
\end{theorem}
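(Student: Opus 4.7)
The approach is the standard $\Gamma$-convergence scheme for discrete-to-continuum elasticity, splitting the argument into a compactness step, a weak lower semicontinuity argument for (i), and a direct construction plus density argument for (ii). For part (i), the hypothesis $\sup_{\eps}\cE^{loc}_\eps(u_\eps)<\infty$ feeds into Theorem~\ref{thm:Korn} to produce
$$\sup_{\eps}\; \eps^d\sum_{x\in \Z^d_\eps}\sum_{i=1}^d \eps^{-2}|u_\eps(x+\eps e_i)-u_\eps(x)|^2 < \infty\,.$$
Theorem~\ref{thm:Poincare} then yields a subsequence $\eps'\to 0$ and a limit $u\in L^2(Q)^d$ with $\mathcal{R}^\ast_{\eps'}u_{\eps'}\to u$ strongly in $L^2(Q)^d$. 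Since each $u_\eps$ vanishes on $\Z^d_\eps\setminus Q_\eps$, this support condition is preserved in the limit and $u\in H^1_0(Q)^d$.

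For the liminf inequality, I introduce, for each $b\in\B$, the piecewise constant extension $g^\eps_b$ of the discrete directional difference quotient $x\mapsto (u_\eps(x+\eps b)-u_\eps(x))/(\eps|b|)$, defined where this formula makes sense inside $Q$. Telescoping $b\in\B$ as a sum of coordinate vectors and using Theorem~\ref{thm:Korn} shows $(g^\eps_b)_{\eps>0}$ is bounded in $L^2(Q)^d$ uniformly in $b$. Testing against $\varphi\in C^\infty_c(Q)^d$, discrete summation by parts together with the strong $L^2$-convergence of $\mathcal{R}^\ast_{\eps'}u_{\eps'}$ gives
$$\int_Q g^\eps_b\cdot\varphi\,\mathrm{d}x \;\longrightarrow\; \int_Q \varphi\cdot \nabla u\,\tfrac{b}{|b|}\,\mathrm{d}x\,,$$
i.e.\ $g^\eps_b\rightharpoonup \nabla u\,\tfrac{b}{|b|}$ weakly in $L^2(Q)^d$. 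Taking the scalar product with $b/|b|$ and applying weak lower semicontinuity of $v\mapsto \int_Q |v|^2$ for every $b\in\B$ separately yields
$$\liminf_{\eps'\to 0}\cE^{loc}_{\eps'}(u_{\eps'}) \;\geq\; \sum_{b\in\B}\int_Q \Big(\tfrac{b}{|b|}\cdot\nabla u\,\tfrac{b}{|b|}\Big)^{\!\p}\mathrm{d}x \;=\; \cE^{loc}(u)\,.$$

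For part (ii), I first pick $u\in C^\infty_c(Q)^d$ and set $u_\eps:=\mathcal{R}_\eps u$ extended by zero on $\Z^d_\eps\setminus Q_\eps$. A second-order Taylor expansion gives $(u_\eps(x+\eps b)-u_\eps(x))/(\eps|b|)=\nabla u(x)\,\tfrac{b}{|b|}+O(\eps\|\nabla^2 u\|_\infty)$ uniformly, so the discrete Riemann sum $\cE^{loc}_\eps(u_\eps)$ converges to $\cE^{loc}(u)$, while $\mathcal{R}^\ast_\eps u_\eps\to u$ in $L^2(Q)^d$ is immediate. For general $u\in H^1_0(Q)^d$, I approximate $u$ in the $H^1_0$-norm by $u_k\in C^\infty_c(Q)^d$, apply the recovery construction to each $u_k$, and extract a diagonal sequence using the continuity of $v\mapsto\cE^{loc}(v)$ on $H^1_0(Q)^d$. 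I expect the main technical nuisance to be the boundary bookkeeping: the sum in $\cE^{loc}_\eps$ runs over all of $\Z^d_\eps$ while $u_\eps$ is extended by zero, so a boundary strip of width $\eps\max_{b\in\B}|b|$ generates artificial cross-boundary differences. For the limsup these are controlled by the compact support of the smooth $u_k$, and for the liminf they are nonnegative and can only help. The Lipschitz regularity of $\partial Q$ enters here to ensure that the measure of this strip is $O(\eps)$ and hence contributes $o(1)$ to both sides.
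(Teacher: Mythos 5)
Your proof is correct, but it follows a genuinely different route from the paper. The paper disposes of Theorem~\ref{th:gamma_konv_laplace} by citing Theorem~4.4 of \cite{neukamm2018stochastic}: the deterministic nearest-neighbour energy is a special case of their ergodic spring-network setting, so the abstract result supplies $\Gamma$-convergence to \emph{some} quadratic functional, and the paper then only has to identify that functional by evaluating the limit on $u\in C^2_c(Q)^d$ with the Taylor expansion $\frac{b}{|b|}\cdot\frac{u(x+\eps b)-u(x)}{\eps|b|}=\frac{b}{|b|}\cdot(\nabla u(x)\frac{b}{|b|})+O(\eps)$. You instead prove both inequalities from scratch: compactness from Theorems~\ref{thm:Korn} and~\ref{thm:Poincare}, the liminf inequality by identifying the weak $L^2$-limit of the piecewise-constant difference quotients via discrete summation by parts and then invoking convexity (for $\p=2$ the integrand is quadratic, so weak lower semicontinuity of the $L^2$-norm applied to each scalar $g^\eps_b\cdot\frac{b}{|b|}$ suffices), and the limsup by a smooth recovery sequence plus a diagonal argument using continuity of $\cE^{loc}$ on $H^1_0(Q)^d$. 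What makes your direct argument work -- and what the citation to \cite{neukamm2018stochastic} obscures -- is that here no cell formula is needed: the energy density is a fixed convex function of the difference quotient, so the naive pointwise limit of the Riemann sums already is the $\Gamma$-limit. Your approach is longer but self-contained and makes the mechanism transparent; the paper's is shorter but delegates the lower bound entirely to an external theorem whose hypotheses must be checked. Your handling of the boundary strip (nonnegativity for the liminf, compact support for the limsup) is the right bookkeeping. The only slightly glib step is the assertion that $u\in H^1_0(Q)^d$ because the support condition ``is preserved in the limit'': what you actually get is that $u$ extended by zero lies in $H^1(\R^d)$ with support in $\overline{Q}$, which yields $u\in H^1_0(Q)^d$ under boundary regularity; since the liminf inequality only uses $\nabla u$ on $Q$, this does not affect the argument.
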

\begin{proof}
    Since our given deterministic setting is a particular case of the ergodic setting in \cite{neukamm2018stochastic} we can apply Theorem 4.4 therein to infer for the sequence $\cE^{loc}_\eps$ the existence of some quadratic functional $\cE^{loc}$ such that the claim holds. It remains to determine the exact structure of $\cE^{loc}$. However, this can be achieved by choosing $u\in C^2_c(Q)^d$ and extending it by zero. We then set $u_\eps(x):=u(x)$ for every $x\in Q_\eps$ and make use of $\frac{b}{|b|}\cdot\frac{u(x+\eps b)-u(x)}{\eps|b|}=\eps\frac{b}{|b|}\cdot(\nabla u(x)\frac{b}{|b|}) +O(\eps^2)$ to infer in the limit $\eps\to0$ that $\cE^{loc}$ indeed has the form we provide above. 
\end{proof}

\begin{lemma}
    For a bounded domain $Q\subset\R^d$ there exists $C>0$ such that for every $u\in H^1_0(Q)^d$ and every $\eps>0$ it holds
    \begin{equation}\label{eq:limit-Reps-behavior}
        \left\|\Reps\Raeps u-u\right\|_{L^2(\R^d)^d}\leq C\eps\norm{\nabla u}_{L^2(Q)^d}\,.
    \end{equation}
\end{lemma}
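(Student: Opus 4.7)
The plan is to pass through an auxiliary full-cube-average operator on $\R^d$, for which the estimate follows from a cube-by-cube Poincar\'e inequality, and then to control a boundary correction by Hardy's inequality.

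First, I would extend $u \in H^1_0(Q)^d$ by zero to $\tilde u \in H^1(\R^d)^d$, so that $\|\nabla \tilde u\|_{L^2(\R^d)^d} = \|\nabla u\|_{L^2(Q)^d}$. For each $z \in \Z^d_\eps$ set $C_z := z + (-\eps/2,\eps/2]^d$; these cubes tile $\R^d$. Define an auxiliary piecewise-constant map $P_\eps \tilde u : \R^d \to \R^d$ by $P_\eps \tilde u|_{C_z} := \dashint_{C_z} \tilde u$. The standard Poincar\'e inequality on each convex cube $C_z$ (whose Poincar\'e constant is of order $\eps^2$), summed over $z$, yields
\[
\|\tilde u - P_\eps \tilde u\|_{L^2(\R^d)^d}^2 \leq C\eps^2 \|\nabla \tilde u\|_{L^2(\R^d)^d}^2 = C\eps^2 \|\nabla u\|_{L^2(Q)^d}^2.
\]

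Second, I would compare $P_\eps \tilde u$ with $\Reps \Raeps u$. Directly from the definitions these two functions coincide on every cube $C_z$ entirely contained in $Q$ (where both equal $\dashint_{C_z} u$) and on every cube entirely outside $Q$ (where both vanish). The difference is therefore supported on the \emph{boundary cubes} straddling $\partial Q$. On such a cube one has $\Raeps u(z) = \tfrac{1}{|C_z \cap Q|}\int_{C_z \cap Q} u$ while $P_\eps \tilde u|_{C_z} = \tfrac{1}{|C_z|}\int_{C_z \cap Q} u$; since the Lipschitz regularity of $\partial Q$ ensures $|C_z \cap Q|$ is comparable to $|C_z|$, a short calculation gives
\[
\int_{C_z} |P_\eps \tilde u - \Reps \Raeps u|^2 \,\mathrm{d}x \leq C\int_{C_z \cap Q} |u|^2\,\mathrm{d}x.
\]

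Third, every boundary cube lies in the tubular neighborhood $Q_{C\eps} := \{x \in Q : \operatorname{dist}(x, \partial Q) < C\eps\}$, and the boundary cubes are disjoint. Hardy's inequality applied to $u \in H^1_0(Q)$ on the Lipschitz domain $Q$ then gives
\[
\sum_{\text{boundary } z} \int_{C_z \cap Q} |u|^2\,\mathrm{d}x \leq \int_{Q_{C\eps}} |u|^2\,\mathrm{d}x \leq C\eps^2 \int_Q \frac{|u|^2}{\operatorname{dist}(x,\partial Q)^2}\,\mathrm{d}x \leq C\eps^2 \|\nabla u\|_{L^2(Q)^d}^2.
\]
Combining the two estimates via the triangle inequality produces the claim. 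The main obstacle is the boundary correction in the second step: the normalization mismatch on boundary cubes between averaging over $C_z$ and over $C_z \cap Q$ must be absorbed into an $O(\eps)$ error, for which both Hardy's inequality and Lipschitz regularity of $\partial Q$ (a standing hypothesis elsewhere in the paper, cf.\ Theorem \ref{th:maintheorem}) are essential.
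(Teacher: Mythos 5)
Your proof is correct, and its first step is in fact the paper's \emph{entire} argument: the paper disposes of the lemma in one line by rescaling the unit-cube Poincar\'e inequality $\left\| u-\dashint_{[0,1]^d}u\right\|_{L^2([0,1]^d)^d}\leq C\norm{\nabla u}_{L^2([0,1]^d)^d}$ to each cell $z+(-\eps/2,\eps/2]^d$ and summing over $z$, which is precisely your estimate for the auxiliary full-cube-average operator $P_\eps$. Where you genuinely diverge is in the boundary correction: the paper silently identifies $\Reps\Raeps u$ with $P_\eps\tilde u$ and never addresses the normalization mismatch on cells straddling $\partial Q$, where $\Raeps u$ averages over $C_z\cap Q$ rather than $C_z$, nor the cells whose centers lie outside $Q_\eps$ but which still meet $Q$, on which $\Reps\Raeps u$ vanishes while $u$ need not. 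Your Hardy-inequality argument closes this gap cleanly, at the price of invoking Lipschitz regularity of $\partial Q$ --- not a hypothesis of the lemma as stated, but harmless since it is assumed in Theorem \ref{th:maintheorem}, where the lemma is applied. Two minor simplifications: in your second step the comparability $|C_z\cap Q|\sim|C_z|$ is not actually needed, since Jensen's inequality gives $\int_{C_z\cap Q}|\Raeps u(z)|^2\le\int_{C_z\cap Q}|u|^2$ and $\int_{C_z}|P_\eps\tilde u|^2\le\int_{C_z\cap Q}|u|^2$ directly, so the cube-wise boundary bound holds for arbitrarily thin intersections; and you should state explicitly that your tubular-neighborhood estimate also absorbs the outside-centered cells just mentioned, which it does.
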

\begin{proof}
    This follows from rescaling the following Poincar\'e inequality to the cube $[0,\eps]^d$
    \begin{equation}
        \left\| u-\dashint_{[0,1]^d}u\right\|_{L^2([0,1]^d)^d}\leq C\norm{\nabla u}_{L^2([0,1]^d)^d}\,.
    \end{equation}
\end{proof}

An important tool for the proof of Theorem \ref{m:l:konvJeps} will be the following Lemma.

\begin{lemma}\label{m:l:konvMittelInt}
Let $Q \subset \R$ be a bounded domain, $\f_\varepsilon : Q \times Q \rightarrow \R^+$ non-negative and suppose that for any open sets $U, J \subset Q$ we have 
\begin{align} \label{m:eq:konvMittelInt}
\dashint_U \dashint_J \f_\varepsilon \epsarrow \tilde{c} ,
\end{align}
where $\tilde{c} \in \R ^+$.
Furthermore let $v_\varepsilon : Q \times Q \rightarrow \R$ be such that  $\sup_{Q\times Q} \Y{v_\varepsilon} \leq C_1< \infty$ and $v_\varepsilon \rightarrow v$ pointwise a.e.
Then 
\begin{align*}
    \int_V \int_W \f_\varepsilon v_\varepsilon \epsarrow \tilde{c} \int_V \int_W v  
\end{align*} 
for any open or compact sets $V,W \subset Q$.
\end{lemma}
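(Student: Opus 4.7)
The plan is to read the hypothesis as a weak-$*$ convergence of the measures $w_\varepsilon\,\mathrm{d}x\,\mathrm{d}y$ on $Q\times Q$ and then combine it with a Lusin--Egorov decomposition of $v_\varepsilon\to v$ to pass to the limit in the product. First I would upgrade the averaging hypothesis: the choice $U=J=Q$ yields a uniform mass bound $\iint_{Q\times Q} w_\varepsilon\leq M$, and by linearity the convergence $\iint_{U\times J} w_\varepsilon\to\tilde c\,|U|\,|J|$ extends to every finite linear combination of indicators of open rectangles $U\times J\subset Q\times Q$. Such simple functions are uniformly dense in $C_c(Q\times Q)$, so together with the mass bound this yields $\iint\phi\,w_\varepsilon\to\tilde c\iint\phi$ for every $\phi\in C_c(Q\times Q)$. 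Covering an open set $O\subset Q\times Q$ by a finite union of rectangles of total volume at most $|O|+\kappa$ and using subadditivity then delivers $\limsup_\varepsilon\iint_O w_\varepsilon\leq\tilde c|O|$. Since any measurable set can be enlarged to an open set of slightly greater measure, this controls $\iint_E w_\varepsilon$ on arbitrary small sets $E$ in the sequel.

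Next I would split
$$\iint_{V\times W}v_\varepsilon w_\varepsilon - \tilde c\iint_{V\times W}v = \iint_{V\times W}(v_\varepsilon-v)\,w_\varepsilon + \Bigl(\iint_{V\times W}v\,w_\varepsilon - \tilde c\iint_{V\times W}v\Bigr).$$
For the bracket, given $\eta>0$, Lusin's theorem provides $\tilde v\in C_c(V\times W)$ with $\|\tilde v\|_\infty\leq C_1$ and a set $O$ of measure less than $\eta$ outside of which $v=\tilde v$. The first step gives $\iint \tilde v\,w_\varepsilon\to\tilde c\iint\tilde v$ and $\tilde c\iint v=\tilde c\iint\tilde v+O(\eta)$, while $\bigl|\iint w_\varepsilon(v-\tilde v)\bigr|\leq 2C_1\limsup_\varepsilon\iint_O w_\varepsilon\leq 2C_1\tilde c\eta$, so the bracket is $O(\eta)$. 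For the remaining term, Egorov produces a measurable $F\subset V\times W$ of arbitrarily small measure $\delta$ off which $v_\varepsilon\to v$ uniformly; estimating
$$\Bigl|\iint_{V\times W}(v_\varepsilon-v)w_\varepsilon\Bigr|\leq \|v_\varepsilon-v\|_{\infty,(V\times W)\setminus F}\iint_{V\times W} w_\varepsilon + 2C_1\iint_F w_\varepsilon,$$
the first summand vanishes thanks to the uniform mass bound, while the second is again controlled by $2C_1\tilde c\delta$ after enlarging $F$ to a slightly bigger open set, giving a total bound of order $\delta$. Sending $\eta,\delta\to 0$ concludes the proof.

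The main obstacle I anticipate is the first step: one must extract from the scalar rectangle-averaging hypothesis enough uniform regularity of the measures $w_\varepsilon\,\mathrm{d}x\,\mathrm{d}y$ to rule out concentration on small sets, which is precisely what is needed to absorb the Lusin and Egorov exceptional sets in the subsequent decompositions. Once weak convergence against continuous test functions together with the uniform mass bound is in place, the remaining estimates are routine approximation arguments.
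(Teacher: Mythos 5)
Your opening step is sound: a function in $C_c(Q\times Q)$ can be uniformly approximated by simple functions built on a grid of products of open sets (half-open cubes intersected with $Q$, whose boundaries are Lebesgue-null and hence carry no mass of the absolutely continuous measures $\f_\varepsilon\,\dif x\,\dif y$), and together with the mass bound from $U=J=Q$ this does give $\iint\phi\,\f_\varepsilon\to\tilde c\iint\phi$ for continuous $\phi$. The genuine gap is the anti-concentration step, and it cannot be repaired under the stated hypotheses. First, the covering claim is simply false: any finite union of rectangles containing an open set $O$ has, up to a null set, measure at least $\Y{\bar O\cap (Q\times Q)}$, so a dense open $O$ of small measure admits no finite rectangle cover of volume close to $\Y{O}$. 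Second, and more fundamentally, weak-$*$ convergence of $\f_\varepsilon\,\dif x\,\dif y$ to $\tilde c\,\dif x\,\dif y$ with convergence of total masses only yields $\limsup_\varepsilon\iint_K\f_\varepsilon\leq\tilde c\Y{K}$ for \emph{compact} $K$ and $\liminf_\varepsilon\iint_O\f_\varepsilon\geq\tilde c\Y{O}$ for \emph{open} $O$; it gives no upper bound over open or general measurable sets of small measure, which is precisely what you need to absorb the Lusin and Egorov exceptional sets. No such bound follows from \eqref{m:eq:konvMittelInt}: take $\f_n=\tilde c\,\Y{Q}^2\Y{A_n}^{-1}\chi_{A_n}$ with $A_n$ a union of tiny balls on a refining grid and $\Y{A_n}\leq 2^{-n}\eta$; then \eqref{m:eq:konvMittelInt} holds for every open $U,J$, yet the open set $E=\bigcup_n A_n$ satisfies $\Y{E}\leq\eta$ and $\iint_E\f_n\geq\tilde c\Y{Q}^2$ for all $n$. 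This is not a pathological edge case but the actual regime of the application: $\f_\varepsilon=\Reps\coeff$ vanishes outside the set where $\varpi=1$, whose measure tends to $0$ while it carries total mass converging to $c\tilde C\Y{Q}^2$, so any argument requiring uniform smallness of $\iint_E\f_\varepsilon$ over small sets $E$ is doomed.

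The paper's proof avoids this by never integrating $\f_\varepsilon$ over anything other than products. Hypothesis \eqref{m:eq:konvMittelInt} is first transferred from open products $U\times J$ to products involving compact sets by complementation, $\int_{K}\int_{Q}\f_\varepsilon=\int_Q\int_Q\f_\varepsilon-\int_{Q\setminus K}\int_Q\f_\varepsilon$, and the Egorov and Lusin exceptional sets are taken in product form $K_\delta\times A_\delta$, so that the bad region $(V\times W)\setminus(K_\delta\times A_\delta)$ decomposes into the two rectangles $V\times(W\setminus A_\delta)$ and $(V\setminus K_\delta)\times A_\delta$, each of which the (extended) hypothesis controls; Lemma \ref{lem:delta-lemma} then guarantees both factors of the complement are small. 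If you want to salvage your strategy you must likewise keep every exceptional set a finite union of products of open or compact sets; the weak-$*$ formulation alone does not see enough of the structure of $\f_\varepsilon$ to control it off such products.
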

\begin{proof}%
First we observe that for a compact set $K\subset Q$ it holds
\begin{align*}
\int_K \int_Q \f_\varepsilon = \int_Q \int_Q \f_\varepsilon - \int_{Q \setminus K} \int_Q  \f_\varepsilon \epsarrow \tilde{c} \Y{Q} \left( \Y{Q} - \Y{Q \setminus K}\right) 
= \tilde{c} \Y{Q} \Y{K}
\end{align*}
and analogously we can argue that for any compact sets $K_1, K_2 \subset Q$ we have the convergence
\begin{align*}
\int_{K_1} \int_{K_2} \f_\varepsilon \epsarrow \tilde{c} \Y{K_1} \Y{K_2}.
\end{align*}
The idea of the main proof is that we split the integral into
\begin{align*}
\int_V \int_W \f_\varepsilon v_\varepsilon = \int_V \int_W \f_\varepsilon (v_\varepsilon -v) + \int_V \int_W \f_\varepsilon v =: L_1^\eps + L_2^\eps
\end{align*}
and show that $L_1^\eps \epsarrow 0$ and $L_2^\eps \epsarrow \tilde{c} \int_V \int_W v$.\\
First we treat $L_1$: 
For any $\delta >0$ there exist, due to Egorov's theorem, compact sets $K_\delta \subset V$, $A_\delta \subset W$ with $\Y{(V\times W)\setminus (K_\delta \times A_\delta)} < \delta$ and such that $v_\varepsilon \rightarrow v$ uniformly on $K_\delta \times A_\delta$.
Moreover due to Lemma \ref{lem:delta-lemma} there exist constants $b_1, b_2 \in \R$ such that $\Y{V \setminus K_\delta} < b_1 \delta$ and $\Y{W \setminus A_\delta} < b_2 \delta$.
We split $L_1^\eps$ into
\begin{align*}
|L_1^\eps| &\leq \int_{V} \int_{W} \f_\varepsilon |v_\varepsilon - v| \\
&\leq \int_{V} \int_{W\setminus A_\delta} \f_\varepsilon |v_\varepsilon - v|
+ \int_{V\setminus K_\delta} \int_{A_\delta} \f_\varepsilon |v_\varepsilon - v|
+ \int_{K_\delta} \int_{A_\delta} \f_\varepsilon |v_\varepsilon - v|.
\end{align*}
For the first term we get from \eqref{m:eq:konvMittelInt} the convergence as follows
\begin{align*}
\int_{V} \int_{W\setminus A_\delta} &\f_\varepsilon |v_\varepsilon - v|
\leq \sup_{V, W\setminus A_\delta} \Y{v_\varepsilon - v} \int_{V} \int_{W\setminus A_\delta} \f_\varepsilon 
\leq  2\sup_{Q} \Y{v_\varepsilon} \int_{V} \int_{W\setminus A_\delta} \f_\varepsilon \\
&\leq 2 C_1 \int_{V} \int_{W\setminus A_\delta} \f_\varepsilon
\epsarrow 2 C_1 \tilde{c} \Y{V} \Y{W\setminus A_\delta} < 2C_1 \tilde{c} \Y{V} b_1 \delta \xlongrightarrow{\delta \rightarrow 0}  0.
\end{align*}
Analogously we get for the second term
\begin{align*}
\int_{V\setminus K_\delta} \int_{A_\delta} \f_\varepsilon |v_\varepsilon - v| 
\leq  2C_1\int_{V \setminus K_\delta} \int_{ A_\delta} \f_\varepsilon 
\epsarrow& 2 C_1 \tilde{c} \Y{V \setminus K_\delta} \Y{A_\delta} \\
&< 2 C_1 \tilde{c} b_2 \delta \Y{W} 
\xlongrightarrow{\delta \rightarrow 0}  0.
\end{align*}
The third term vanishes due to the uniform convergence of $v_\varepsilon$ on $K_\delta \times A_\delta$:
\begin{align*}
\int_{K_\delta} \int_{A_\delta} \f_\varepsilon |v_\varepsilon - v| \leq \sup_{K_\delta, A_\delta} \Y{v_\varepsilon - v} \int_{K_\delta} \int_{A_\delta} \f_\varepsilon \epsarrow 0.
\end{align*}
Since $\delta$ was arbitrary, we conclude $L_1^\eps\to0$ as $\eps\to0$.

Now we treat the remaining term $L_2^\eps$: 
Using Lusin's Theorem, there exist for any $\delta >0$ compact sets $K_\delta \subset V$, $A_\delta \subset W$ such that $v$ is continuous on $K_\delta \times A_\delta$ and $\Y{(V \times W) \setminus (K_\delta \times A_\delta) } < \delta $.
Using again Lemma \ref{lem:delta-lemma}, there exist constants $b_1, b_2 \in \R$ such that $\Y{V \setminus K_\delta} < b_1 \delta$ and $\Y{W \setminus A_\delta} < b_2 \delta$.
Let $v_m = \sum_i v_m^i \chi_{K_{\delta,m}^i \times A_{\delta,m}^i} $ be a sequence of step functions such that $v_m \rightarrow v$ uniformly on $K_\delta \times A_\delta$ and $\y{v_m -v}_\infty < \tfrac{1}{m}$.
It holds 
\begin{align*}
\int_{K_\delta} \int_{A_\delta} \f_\varepsilon v_m = \sum_i v_m^i \int_{K^i_{\delta,m}} \int_{A^i_{\delta,m}} \f_\varepsilon \epsarrow \tilde{c} \sum_i v_m^i \Y{K^i_{\delta,m}} \Y{A^i_{\delta,m}}  = \tilde{c} \int_{K_\delta} \int_{A_\delta} v_m 
\end{align*}

and
\begin{align*}
\Y{\int_{K_\delta} \int_{A_\delta} \f_\varepsilon (v_m-v) } \leq \tfrac{1}{m} \int_{K_\delta} \int_{A_\delta} \f_\varepsilon \epsarrow \tfrac{1}{m} \tilde{c} \Y{K_\delta} \Y{A_\delta}.
\end{align*}
Since 
$$L_2^\eps = \int_V \int_{W\setminus A_\delta} \f_\varepsilon v + \int_{V\setminus K_\delta} \int_{A_\delta} \f_\varepsilon v + \int_{K_\delta} \int_{A_\delta} \f_\varepsilon v_m
+\int_{K_\delta} \int_{A_\delta} \f_\varepsilon (v-v_m) 
$$
the above can be combined to
\begin{align*}
    & \Y{\lim_{\eps\to0}L_2^\eps-\int_{K_\delta}\int_{A_\delta}\tilde c v}\\
    &\quad\leq \sup_{Q\times Q} \Y{v} \tilde{c} \left( \Y{V} \Y{W\setminus A_\delta} + \Y{V\setminus K_\delta} \Y{A_\delta} \right) + \tilde{c} \int_{K_\delta} \int_{A_\delta} |v_m-v| + \tfrac{1}{m} \tilde{c} \Y{K_\delta} \Y{A_\delta}.
\end{align*}
After the limit $m\to\infty$ we infer for any $\delta>0$ small enough
$$\Y{\lim_{\eps\to0}L_2^\eps-\int_{K_\delta}\int_{A_\delta}\tilde c v}\leq C\delta\,,$$
with $C$ independent from $\delta$. From here we conclude $\lim_{\eps\to0}L_2^\eps=\int_{V}\int_{W}\tilde c v$.

\end{proof}

\section{Convergence properties of the non-local energy} \label{chap:first_model}
While Theorem \ref{th:gamma_konv_laplace} will be used to prove the liminf-property as well as the existence of recovery sequences with respect to $\cE^{loc}$, the next theorem will enable us to obtain the same properties for the non-local energy $\Jepsu$ in a proper sense:

\begin{theorem}\label{m:l:konvJeps}
Let $Q\subset \R^d$ be a bounded domain, let $\ocweight , c, d, s, p, \q$ satisfy Assumption \ref{m:assumption} and let $u_\varepsilon : Q_\varepsilon \rightarrow \R^d$ be such that $\Repsu \rightarrow u$ strongly in $L^p$, %
then
\begin{align*}
\liminf_{\varepsilon \rightarrow 0} \Jepsu  &= \liminf_{\varepsilon \rightarrow 0} \int_Q \int_Q \Reps \coeff \frac{\RV}{\Y{\Reps x- \Reps y}^{d+ps}} ~\dif x~\dif y \\
&\geq  \bar{c} %
\int_Q \int_Q \frac{\V}{\Y{x-y}^{d+ps}}  .
\end{align*}
If additionally  $ \sup_\varepsilon \sup_{x,y \in Q_\varepsilon} \frac{\V}{\Y{x-y}^{d+ps}} < \infty $,
then
\begin{align*}
\Jepsu =& \int_Q \int_Q \Reps \coeff \frac{\RV}{\Y{\Reps x- \Reps y}^{d+ps}} ~\dif x~\dif y \\
&\asarrow  \bar{c} 
\int_Q \int_Q \frac{\V}{\Y{x-y}^{d+ps}}  ,
\end{align*}
where $ \bar{c} = c\tilde{C}$ , with $\tilde{C}$ defined in \eqref{eq:defprob} and
\begin{align*}
\Reps \coeff :=  c \varepsilon^\alpha \ocweight{\left( \tfrac{\Reps x}{\varepsilon}, \tfrac{\Reps x}{\varepsilon}- \tfrac{\Reps y}{\varepsilon}\right)} \left(\tfrac{\Y{\Reps x- \Reps y}}{\varepsilon}\right)^{d+ps-\q} .
\end{align*}
\end{theorem}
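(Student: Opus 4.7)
The plan is to apply Lemma \ref{m:l:konvMittelInt} with the random weight $f_\eps(x,y):=\Reps\coeff$ and the test integrand
\[
v_\eps(x,y):=\frac{\RV}{|\Reps x-\Reps y|^{d+ps}}\,.
\]
From the strong convergence $\Repsu\to u$ in $L^p$ one extracts a subsequence along which $\Repsu(x)\to u(x)$ pointwise almost everywhere; combined with $\Reps x\to x$ and continuity of $V$ in its third argument this yields $v_\eps\to v:=\V/|x-y|^{d+ps}$ pointwise a.e. Under the supplementary sup-bound hypothesis $v_\eps$ is uniformly bounded, so Lemma \ref{m:l:konvMittelInt} gives the equality directly. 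Without that hypothesis I would truncate by $v_\eps^M:=\min(v_\eps,M)$ so that the lemma delivers $\int_Q\int_Q f_\eps v_\eps^M\to\bar c\int_Q\int_Q v^M$ for each $M$; since $f_\eps\ge 0$ and $v_\eps\geq v_\eps^M$, this yields $\liminf_\eps\Jepsu\ge\bar c\int_Q\int_Q v^M$, and monotone convergence in $M\uparrow\infty$ produces the claimed $\liminf$ inequality.

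The core step is to verify the averaging hypothesis \eqref{m:eq:konvMittelInt} for $f_\eps$. A direct calculation from \eqref{eq:defprob} gives $\E[\coeff]=c\tilde C=\bar c$; using $\ocweight^2=\ocweight$ one also obtains
\[
\E[\coeff^{\,2}]=c^2\tilde C\,\eps^{-\alpha}\bigl(|x-y|/\eps\bigr)^{d+ps-\q}\leq C\eps^{\q-\alpha-d-ps}
\]
uniformly for $x,y\in Q_\eps$ (the upper bound uses $|x-y|\leq \mathrm{diam}(Q)$ together with $d+ps-\q>0$). Setting $X_\eps(U,J):=\dashint_U\dashint_J\Reps\coeff\,\mathrm{d} x\,\mathrm{d} y$ and exploiting the independence of the family $\{\ocweight(x,y)\}_{x,y\in\Z^d}$ gives
\[
\var{X_\eps(U,J)}\leq \frac{\eps^{4d}}{|U|^2|J|^2}\cdot\#U_\eps\cdot\#J_\eps\cdot C\eps^{\q-\alpha-d-ps}\leq C_{U,J}\,\eps^{d+\q-\alpha-ps}\,.
\]
By condition \eqref{cond:param} the exponent $d+\q-\alpha-ps$ is strictly positive, so the variance vanishes polynomially. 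Tschebyscheff's inequality \eqref{tschebyscheff0} combined with the Borel--Cantelli lemma along a geometric subsequence $\eps_k=2^{-k}$ yields $X_{\eps_k}(U,J)\to\bar c$ almost surely. Exhausting the open sets of $Q$ by a countable family of dyadic rectangles and using the monotonicity of $X_\eps(U,J)$ in each of the two set arguments then upgrades this to a single full-probability event on which $X_\eps(U,J)\to\bar c$ holds simultaneously for every pair of open $U,J\subset Q$, which is exactly the hypothesis of Lemma \ref{m:l:konvMittelInt}.

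The hardest part is the interplay between two different subsequence extractions: the pointwise a.e.\ convergence of $v_\eps$ requires passing to a subsequence of $u_\eps$, while the almost sure convergence of the random averages $X_\eps(U,J)$ sits on a second subsequence produced by Borel--Cantelli along $\eps_k=2^{-k}$. These must be combined by a diagonal argument, and since the $\liminf$ in the theorem is taken along an arbitrary sequence $\eps\to 0$, one has to argue that the deterministic limit $\bar c\int\int v$ does not depend on the extracted subsequence, so that it controls $\liminf_{\eps\to 0}\Jepsu$. Once that technical point is settled, the $\liminf$ inequality follows from Lemma \ref{m:l:konvMittelInt} applied to $v_\eps^M$ followed by $M\to\infty$, and the equality under the additional sup assumption follows from the lemma applied directly to $v_\eps$ on the full-probability event above.
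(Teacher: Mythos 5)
Your proposal is correct and follows the same core strategy as the paper: feed Lemma \ref{m:l:konvMittelInt} with $\f_\eps=\Reps\coeff$, verify the averaging hypothesis \eqref{m:eq:konvMittelInt} by splitting off the mean $c\tilde C$ and killing the fluctuation with a Tschebyscheff variance bound of order $\eps^{d+\q-\alpha-ps}$ (this is precisely the content of Theorem \ref{m:th:konvMittelw}), and recover the liminf inequality by truncation plus monotone convergence. Two of your choices genuinely differ from the paper's and are worth recording. First, you truncate the composite integrand, $v_\eps^M=\min(v_\eps,M)$, whereas the paper truncates the displacement $u$ componentwise at level $k$ and the kernel $|x-y|^{-(d+ps)}$ at level $\gamma$ separately; your version is simpler because the inequality $v_\eps\geq v_\eps^M$ is trivial, while the paper's route needs $V(x,y,u^k(x)-u^k(y))\leq V(x,y,u(x)-u(y))$, which for a general convex $V$ is not immediate. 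Second, you obtain the almost sure convergence of the averaged weights by Borel--Cantelli along $\eps_k=2^{-k}$ together with a countable dyadic exhaustion producing a single full-probability event valid for all open $U,J$; the paper simply asserts that convergence in probability implies almost sure convergence, so your treatment is the more careful one. Two small caveats: the average $\dashint_U\dashint_J\Reps\coeff$ is not monotone in the set arguments because of the normalization, so the dyadic upgrade should be run on the unnormalized integrals $\int_U\int_J\Reps\coeff$, which are monotone and additive since $\Reps\coeff\geq0$; and neither your Borel--Cantelli step nor the paper's argument actually covers an arbitrary sequence $\eps\to0$ rather than a summable or geometric one --- a gap you inherit from the paper rather than introduce.
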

While the second factor in the integral involving $V$ will converge strongly in $L^1$, the major work to be done relates to the first term. We will prove weak convergence of $\Reps \coeff$ to a constant in order to apply Lemma \ref{m:l:konvMittelInt} in the proof of Theorem \ref{m:l:konvJeps}.
Hence, we will postpone the proof of Theorem \ref{m:l:konvJeps} and first state and proof the weak convergence of $\Reps \coeff$ in Theorem \ref{m:th:konvMittelw}.

Since we want to show convergence of some kind of average integrals, we first give the definition of average integrals of functions that are defined on  a subset of $\mathbb{Z}_\varepsilon^d$:

\begin{notation}
For any $\f_\varepsilon : A_\varepsilon \subset \Z^d_\varepsilon \rightarrow \R$ and any $U_\varepsilon= (U \cap \Z^d_\varepsilon) \subset A_\varepsilon $ we use the notation of average integrals as follows
\begin{align*}
\dashint_U \Reps \f_\varepsilon ( x) ~\dif x := \frac{1}{\Y{U}} \int_U  \Reps \f_\varepsilon ( x) ~\dif x = \frac{1}{\Y{U}} \varepsilon^d \sum_{x \in U_\varepsilon} \f_\varepsilon( x)
\end{align*} 
\end{notation}
Here the second equality results from Remark \ref{remark_1}. 

\begin{theorem}\label{m:th:konvMittelw}
Let $Q\subset \R^d$ be a bounded domain and let $\ocweight , c, d, s, p, \q$ satisfy Assumption \ref{m:assumption}.
Then we have for any open sets $U,J \subset Q$ 
\begin{align*}
\dashint_{x\in U} &\dashint_{y\in J} \Reps \coeff ~\dif y~\dif x \\
=&\dashint_{x\in U} \dashint_{y\in J}  c \varepsilon^\alpha \ocweight{\left( \tfrac{\Reps x}{\varepsilon}, \tfrac{\Reps x}{\varepsilon}- \tfrac{\Reps y}{\varepsilon}\right)} \left(\tfrac{\Y{\Reps x- \Reps y}}{\varepsilon}\right)^{d+ps -\q}  ~\dif y~\dif x 
\asarrow c\tilde{C},
\end{align*}
where $\tilde{C}$ is defined in \eqref{eq:defprob}.
\end{theorem}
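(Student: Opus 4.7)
The proof strategy is to replace the ergodicity argument of \cite{Heida2} by a direct second-moment/Tschebyscheff argument, exploiting the i.i.d.\ Bernoulli structure of the field $\ocweight$ together with condition \eqref{cond:param}. Using Remark \ref{remark_1}, the quantity to analyse equals a normalized double sum
$$A_\eps(U,J) \;:=\; \dashint_U\dashint_J \Reps\coeff(x,y)\,dy\,dx
\;=\; \frac{\eps^{2d}}{|U||J|}\sum_{x\in U_\eps}\sum_{y\in J_\eps}\coeff(x,y)\,.$$
My plan is to establish (i) $\E[A_\eps(U,J)]\to c\tilde C$, and (ii) a polynomial variance bound $\var{A_\eps(U,J)}\leq C\,\eps^{d-ps+\q-\alpha}$, and then to combine the two through Tschebyscheff and Borel--Cantelli along a geometric sequence $\eps_n=2^{-n}$.

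For (i), since $\ocweight(x/\eps,y/\eps)$ is Bernoulli with probability $\tilde C\eps^\alpha(|x-y|/\eps)^{-d-ps+\q}$ for $x\ne y$, a direct computation gives
$$\E[\coeff(x,y)]=c\eps^{-\alpha}\cdot \tilde C\eps^\alpha (|x-y|/\eps)^{-d-ps+\q}\cdot(|x-y|/\eps)^{d+ps-\q}=c\tilde C\,,$$
so after summing and using $\eps^d|U_\eps|\to|U|$, $\eps^d|J_\eps|\to|J|$, one obtains $\E[A_\eps(U,J)]\to c\tilde C$ (the diagonal $x=y$ contributes $0$ since $\ocweight$ vanishes there). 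For (ii), independence of the summands plus $\ocweight^2=\ocweight$ give $\var{\coeff(x,y)}\leq \E[\coeff(x,y)^2] = c^2\tilde C\,\eps^{-\alpha}(|x-y|/\eps)^{d+ps-\q}$, which after the crude bound $|x-y|\leq\mathrm{diam}(Q)$ yields
$$\var{A_\eps(U,J)}\;\leq\;\frac{\eps^{4d}}{(|U||J|)^2}\cdot |U_\eps||J_\eps|\cdot c^2\tilde C\,\eps^{-\alpha}\bigl(\tfrac{\mathrm{diam}(Q)}{\eps}\bigr)^{d+ps-\q}
\;\leq\; \frac{C(U,J,Q)}{|U||J|}\,\eps^{d-ps+\q-\alpha}.$$
This is where condition \eqref{cond:param} is used in its sharp form: the exponent $d-ps+\q-\alpha>0$ is exactly what forces the variance to vanish.

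For the conclusion, fix $\delta>0$ and apply Theorem \ref{th:tschebyscheff} along $\eps_n=2^{-n}$:
$$\sum_n \mathbb{P}\bigl(|A_{\eps_n}(U,J)-\E[A_{\eps_n}(U,J)]|\geq\delta\bigr)\;\leq\;\frac{1}{\delta^2}\sum_n C\,\eps_n^{d-ps+\q-\alpha}\;<\;\infty\,,$$
so Borel--Cantelli produces a single event of full measure on which $A_{\eps_n}(U,J)\to c\tilde C$. The full a.s.\ convergence as $\eps\to 0$ (not just along the subsequence) is then handled by taking a countable intersection over a countable base of open pairs $(U,J)$, which is all that is needed for the subsequent application of Lemma \ref{m:l:konvMittelInt}.

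The main obstacle I anticipate is the variance computation: one must verify that the rather delicate exponent arithmetic $-2\alpha + \alpha + 2(d+ps-\q) - (d+ps-\q) = -\alpha + (d+ps-\q)$ combined with the cardinality factor $\eps^{4d}\cdot\eps^{-2d}$ and the worst-case bound $(|x-y|/\eps)^{d+ps-\q}\leq \eps^{-(d+ps-\q)}$ really reduces to a power of $\eps$ with \emph{positive} exponent, i.e.\ that \eqref{cond:param} is being used efficiently. Condition \eqref{cond:param2} enters more benignly, as it only ensures that the quantities in \eqref{eq:defprob} are legitimate probabilities and that the exponent $d+ps-\q$ is positive so the bound on $(|x-y|/\eps)^{d+ps-\q}$ goes in the correct direction.
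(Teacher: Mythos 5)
Your proposal is correct and follows essentially the same route as the paper: the same decomposition into mean and fluctuation parts, the same computation showing the expectation of each summand equals $c\tilde C$, the same independence-based variance bound of order $\eps^{d-ps+\q-\alpha}$ using $\ocweight^2=\ocweight$, and the same application of Tschebyscheff's inequality with condition \eqref{cond:param}. Your Borel--Cantelli step along $\eps_n=2^{-n}$ is in fact a more careful justification of the almost-sure conclusion than the paper's own final line, which passes directly from convergence in probability to almost-sure convergence; just note that the countable intersection over a base of pairs $(U,J)$ buys you universality in the sets, not an upgrade from the geometric subsequence to all $\eps\to0$.
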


\begin{proof}%
Using \eqref{eq:integral_identity}, the integral can be rewritten in the following way
\begin{align*}
 \dashint_{x\in U} &\dashint_{y\in J}  c \varepsilon^\alpha \ocweight{\left( \tfrac{\Reps x}{\varepsilon}, \tfrac{\Reps x}{\varepsilon}- \tfrac{\Reps y}{\varepsilon}\right)} \left(\tfrac{\Y{\Reps x- \Reps y}}{\varepsilon}\right)^{d+ps-\q}  ~\dif y~\dif x\\
 =& c \varepsilon^{-\alpha} \frac{1}{\Y{ U} \Y{J} } \varepsilon^{2d} \underset{x \in U }{\sum}\underset{y \in J_\varepsilon}{\sum} \ocweight{\left(\tfrac{x}{\varepsilon}, \tfrac{x}{\varepsilon}- \tfrac{y}{\varepsilon}\right)}\left( \tfrac{\Y{x-y}}{\varepsilon} \right)^{d+ps-\q} \\
=& c \varepsilon^{-\alpha} \frac{1}{\Y{ U} \Y{J} }  \varepsilon^{2d} \underset{x \in U_\varepsilon }{\sum}\underset{y \in J_\varepsilon}{\sum} 
 \E \left[\cweighta{\tfrac{x}{\varepsilon}, \tfrac{x-y}{\varepsilon}} \right] \left( \tfrac{\Y{x-y}}{\varepsilon} \right)^{d+ps-\q}  \\
&+  c \varepsilon^{-\alpha} \frac{1}{\Y{ U} \Y{J} } \varepsilon^{2d} \underset{x \in U_\varepsilon }{\sum}\underset{y \in J_\varepsilon}{\sum} 
 \left(\cweighta{\tfrac{x}{\varepsilon}, \tfrac{x-y}{\varepsilon}} -\E \left[\cweighta{\tfrac{x}{\varepsilon}, \tfrac{x-y}{\varepsilon}} \right] \right) \left( \tfrac{\Y{x-y}}{\varepsilon} \right)^{d+ps-\q}  \\
=& I_1^\eps + I_2^\eps,
\end{align*}
where
\begin{align*}
I_1^\eps:=c \varepsilon^{-\alpha} \frac{1}{\Y{ U} \Y{J} }  \varepsilon^{2d} \underset{x \in U_\varepsilon }{\sum}\underset{y \in J_\varepsilon}{\sum} 
 \E \left[\cweighta{\tfrac{x}{\varepsilon}, \tfrac{x-y}{\varepsilon}} \right] \left( \tfrac{\Y{x-y}}{\varepsilon} \right)^{d+ps-\q}
\end{align*}
and
\begin{align*}
I_2^\eps :=& c \varepsilon^{-\alpha} \frac{1}{\Y{ U} \Y{J} } \varepsilon^{2d}  \underset{x \in U_\varepsilon }{\sum}\underset{y \in J_\varepsilon}{\sum} 
 \left(\cweighta{\tfrac{x}{\varepsilon}, \tfrac{x-y}{\varepsilon}} -\E \left[\cweighta{\tfrac{x}{\varepsilon}, \tfrac{x-y}{\varepsilon}} \right] \right) \left( \tfrac{\Y{x-y}}{\varepsilon} \right)^{d+ps-\q}.
\end{align*}
Using $\E \left[\cweighta{\tfrac{x}{\varepsilon}, \tfrac{x-y}{\varepsilon}} \right] =\tilde{C} \varepsilon^{\alpha} \left(\tfrac{\Y{x-y}}{\varepsilon} \right)^{-d-ps+\q}$ 
, $I^\varepsilon_1$ turns into
\begin{align*}
I^\varepsilon_1=& c\tilde{C}  \frac{1}{\Y{ U} \Y{J} } \varepsilon^{2d} \underset{x \in U_\varepsilon }{\sum}\underset{y \in J_\varepsilon}{\sum} 
 \left( \tfrac{\Y{x-y}}{\varepsilon} \right)^{-d-ps+\q} \left( \tfrac{\Y{x-y}}{\varepsilon} \right)^{d+ps-\q} \epsarrow  c\tilde{C} .
\end{align*}

The second step of the proof is to show that $I^\varepsilon_2 \asarrow 0 $.
Let $0<\delta $, then we want to show that
\begin{align}
\lim_{\varepsilon \rightarrow 0}\mathbb{P} \left( \Y{ \frac{c \varepsilon^{2d-\alpha} }{\Y{ U} \Y{J} }  \underset{x \in U_\varepsilon }{\sum}\underset{y \in J_\varepsilon}{\sum} 
 \left(\cweighta{\tfrac{x}{\varepsilon}, \tfrac{x-y}{\varepsilon}} -\E \left[\cweighta{\tfrac{x}{\varepsilon}, \tfrac{x-y}{\varepsilon}} \right] \right) \left( \tfrac{\Y{x-y}}{\varepsilon} \right)^{d+ps-\q}} < \delta \right) =  1.
\end{align}
Therefore, we apply Tschebyscheff's inequality (Theorem \ref{th:tschebyscheff}) with 
\begin{align*}
X= c \varepsilon^{-\alpha} \frac{1}{\Y{ U} \Y{J} } \varepsilon^{2d}  \underset{x \in U_\varepsilon }{\sum}\underset{y \in J_\varepsilon}{\sum} 
\left( \tfrac{\Y{x-y}}{\varepsilon} \right)^{d+ps-\q}
 \cweighta{\tfrac{x}{\varepsilon}, \tfrac{x-y}{\varepsilon}} .
\end{align*}
Since $\var{\sum_{i} a_i Y_i} = \sum_i a_i^2 \var{Y_i}$ for uncorrelated random variables $Y_i$, the variance of $X$ can be estimated as follows 
\begin{align*}
\var{X} &= \frac{c^2 \varepsilon^{-2\alpha + 4 d} }{\Y{ U}^2 \Y{J}^2 }  \underset{x \in U_\varepsilon }{\sum}\underset{y \in J_\varepsilon}{\sum} 
\left( \tfrac{\Y{x-y}}{\varepsilon} \right)^{2(d+ps-\q)}
p_\xi^{\alpha, \q} \left(1 - p_\xi^{\alpha, \q}  \right) \\
&\leq \frac{c^2 \varepsilon^{-2\alpha + 4 d} }{\Y{ U}^2 \Y{J}^2 }  \underset{x \in U_\varepsilon }{\sum}\underset{y \in J_\varepsilon}{\sum} 
\left( \tfrac{\Y{x-y}}{\varepsilon} \right)^{2(d+ps-\q)}
\tilde{C} \varepsilon^\alpha \left( \tfrac{\Y{x-y}}{\varepsilon} \right)^{-(d+ps-\q)} \\
&= \frac{c^2 \tilde{C}}{\Y{ U}^2 \Y{J}^2 }  \varepsilon^{-\alpha + 4 d}  \underset{x \in U_\varepsilon }{\sum}\underset{y \in J_\varepsilon}{\sum} 
\left( \tfrac{\Y{x-y}}{\varepsilon} \right)^{d+ps-\q} \\
&=\frac{c^2 \tilde{C}}{\Y{ U}^2 \Y{J}^2 }  \varepsilon^{-\alpha +2 d} \int_U \int_J
\left( \tfrac{\Y{\Reps x- \Reps y}}{\varepsilon} \right)^{d+ps-\q} ~\dif x \dif y \\
&\leq \frac{c^2 \tilde{C}}{\Y{ U} \Y{J} }  \varepsilon^{d-ps+\q-\alpha} M_{U,J}^{d+ps-\q},
\end{align*}
where $p_\xi^{\alpha, \q}$ is defined in \eqref{eq:defprob} and
\begin{align*}
 M_{U,J} := \max_{x \in U,~y\in J} \Y{x-y}.
\end{align*}
Now we use estimate \eqref{tschebyscheff} of Theorem \ref{th:tschebyscheff} and get
\begin{align}
\nonumber
&\mathbb{P} \left( \Y{c \varepsilon^{-\alpha} \frac{1}{\Y{ U} \Y{J} } \varepsilon^{2d}  \underset{x \in U_\varepsilon }{\sum}\underset{y \in J_\varepsilon}{\sum} 
 \left(\cweighta{\tfrac{x}{\varepsilon}, \tfrac{x-y}{\varepsilon}} -\E \left[\cweighta{\tfrac{x}{\varepsilon}, \tfrac{x-y}{\varepsilon}} \right] \right) \left( \tfrac{\Y{x-y}}{\varepsilon} \right)^{d+ps-\q}} < \delta \right) \\
 &\geq 1-\frac{1}{\delta^2} \frac{c^2 \tilde{C}}{\Y{ U} \Y{J} } M_{U,J}^{d+ps-\q}  \varepsilon^{d-ps+\q-\alpha} ~\xlongrightarrow{\varepsilon \rightarrow 0} 1, \qquad \text{for } d> ps-\q + \alpha.
 \label{eq:conv_prob}
\end{align}

 Consequently, we find $\lim_{\varepsilon \rightarrow 0} \mathbb{P}\left(\Y{I_2^\epsilon }<\delta \right)=1$ for every $\delta>0$, which implies that $I_2^\epsilon \rightarrow 0$ almost surely.
\end{proof}

\begin{proof}[\bf Proof of Theorem \ref{m:l:konvJeps}]
Let $\delta > 0$ and $k\in \mathbb{N}$. We define
\begin{align*}
g_\gamma (x,y) := \begin{cases} \Y{x-y}^{d+ps}, & \text{if~} \Y{x-y} > \gamma \\
\gamma^{d+ps}, & \text{if~} \Y{x-y}\leq\gamma
\end{cases}
\end{align*}
and with the superscript $k$ we denote the component-wise restriction of a function $u$ to the interval $[-k,k]$, defined as
\begin{align*}
    u^k := \left( 
    \begin{array}{c}
\max\{-k, \min\{u_1, k\} \} \\ 
\vdots \\
\max\{-k, \min\{u_d, k\} \}
\end{array}
   \right).
\end{align*}
Then, as we will argue in detail below,
\begin{align*}
\liminf_{\varepsilon \rightarrow 0} \Jepsu &\geq \liminf_{\eps\to0}\int_Q \int_Q \Reps \coeff \frac{\RVk}{g_\gamma (\Reps x, \Reps y)} ~\dif x~\dif y \\
 &= \,\bar{c} \int_Q \int_Q \frac{\Vk}{g_\gamma (x, y)} ~\dif x~\dif y \\
&\xlongrightarrow[k\rightarrow \infty]{\gamma \rightarrow 0} \bar{c} \int_Q \int_Q \frac{\V}{\Y{x-y}^{d+ps}} ~\dif x~\dif y  .
\end{align*}
The convergence in the above equation for $\varepsilon \rightarrow 0$ follows by Lemma \ref{m:l:konvMittelInt} with 
\begin{align*}
    \f_\varepsilon &= \Reps \coeff ,\\
    v_\varepsilon &= \frac{\RVk}{g_\gamma (\Reps x, \Reps y)}, \\
    v &= \frac{\Vk}{g_\gamma (x, y)}.
\end{align*}
By assumption, we have $\Repsu \rightarrow u$ strongly in $L^p$ and thus $\Repsu^k \rightarrow u^k$ strongly in $L^p$. Therefore there exists a subsequence $\varepsilon_j$ such that $\Reps u_{\varepsilon_j}^k \rightarrow u^k$ pointwise a.e. It follows that $v_\varepsilon \rightarrow v$ pointwise a.e.
Condition \eqref{m:eq:konvMittelInt} is ensured by Theorem \ref{m:th:konvMittelw} and we have 
\begin{align*}
    \sup_{Q \times Q} \Y{v_\varepsilon} \overset{\eqref{eq:p_growth}}{\leq}  \frac{c_{xy}2^p k^p}{g_\gamma (\Reps x, \Reps y)} \leq \frac{c_{xy}2^p k^p}{\gamma^{d+ps}} < \infty .
\end{align*}

If additionally $ \sup_\varepsilon \sup_{x,y \in Q_\varepsilon} \frac{\V}{\Y{x-y}^{d+ps}} < \infty $, we can apply Lemma \ref{m:l:konvMittelInt} directly analogous to the previous case.
\end{proof}

\section{Proof of Theorem \ref{th:maintheorem}}

Using Theorem \ref{th:gamma_konv_laplace} and the results from Section \ref{chap:first_model}, proving the main-theorem (Theorem \ref{th:maintheorem}) is reduced to combining the convergences of the individual terms.
\begin{proof}[Proof of Theorem \ref{th:maintheorem}.] 

\begin{enumerate}
\item[(i)] 
If $\liminf_{\eps\to0}\cE_\eps u_\eps=\infty$, the statement is clear. Otherwise let $u_{\eps'}$ be a minimizing subsequence of $u_\eps$, i.e. 
$$\liminf_{\varepsilon\to0} \mathcal{E}_{\varepsilon} u_{\varepsilon}=\lim_{\varepsilon'\to0} \mathcal{E}_{\varepsilon'} u_{\varepsilon'} =: \cE_0 < \infty.$$ 
Without loss of generality, we may assume that $u_\eps=u_{\eps'}$, because any other subsequence $u_{\eps''}$ of $u_\eps$ will have the property
$$\liminf_{\varepsilon''\to0} \mathcal{E}_{\varepsilon''} u_{\varepsilon''}\geq\lim_{\varepsilon'\to0} \mathcal{E}_{\varepsilon'} u_{\varepsilon'} \,,$$
and therefore will not harm the inequalities below.

Theorem \ref{th:gamma_konv_laplace} yields along a subsequence $\varepsilon_j$ that $\mathcal{R}_{\varepsilon_j}^\ast u_{\varepsilon_j}\rightarrow u$ in $L^2(Q)^d$ and $\mathcal{E}^{loc}u \leq \liminf_{\varepsilon_j \rightarrow 0} \mathcal{E}^{loc}_{\varepsilon_j}u_{\varepsilon_j}$. 
Theorem \ref{thm:Poincare} gives us a further (sub)subsequence $\varepsilon_{j_k}$ such that $\mathcal{R}_{\varepsilon_{j_k}}^\ast u_{\varepsilon_{j_k}}\rightarrow u$ in $L^p(Q)^d$ and
applying Theorem \ref{m:l:konvJeps} yields
\begin{align*}
\mathcal{E}u &\leq \liminf_{{\varepsilon_{j_k}} \rightarrow 0} \mathcal{E}^{V}_{\varepsilon_{j_k}}u_{\varepsilon_{j_k}} + 
\liminf_{\varepsilon_{j_k} \rightarrow 0} \mathcal{E}^{loc}_{\varepsilon_{j_k}}u_{\varepsilon_{j_k}}
+  \lim_{\varepsilon_{j_k} \rightarrow 0} \mathcal{F}_{\varepsilon_{j_k}} u_{\varepsilon_{j_k}}  &\text{a.s.} \\
&=  \liminf_{\varepsilon_{j_k} \rightarrow 0} \left(  \mathcal{E}^{V}_{\varepsilon_{j_k}}u_{\varepsilon_{j_k}} + \mathcal{E}^{loc}_{\varepsilon_{j_k}}u_{\varepsilon_{j_k}} +   \mathcal{F}_{\varepsilon_{j_k}} u_{\varepsilon_{j_k}}   \right)  &\text{a.s.}\\
&=  \liminf_{\varepsilon_{j_k} \rightarrow 0}  \mathcal{E}_{\varepsilon_{j_k}} u_{\varepsilon_{j_k}}=\cE_0  &\text{a.s.}
\end{align*}
\item[(ii)] 
First we observe that for $u \in C^2_c(Q)^d$ and $u_\varepsilon := \mathcal{R}_\varepsilon u$ we have $\mathcal{E}_\varepsilon u_\varepsilon \asarrow \mathcal{E}u$.

In general, if $\mathcal{E}u < \infty$  for any function $u\in L^2(Q)^d$, then Korn's inequality implies that $u \in H^1_0(Q)^d$.
In particular, there exists for any $K\in \mathbb{N}$ a function $u_K \in C^2_c(Q)^d$ such that 
\begin{align}\label{eq:prooflimsup1}
    \y{u-u_K}_{H^1(Q)^d}< \frac{1}{2K}
\end{align}
and 
\begin{align}\label{eq:prooflimsup2}
    \Y{\mathcal{E}u - \mathcal{E}u_K}< \frac{1}{2K}.
\end{align}
For a given $K\in \mathbb{N}$ we choose such a $u_K\in C^2_c(Q)^d$ that satisfies properties \eqref{eq:prooflimsup1} and \eqref{eq:prooflimsup2} and we choose an $\varepsilon_K$ such that $\Y{\mathcal{E}_\varepsilon \mathcal{R}_\varepsilon u_K - \mathcal{E} u_K}< \frac{1}{2K}$ for any $\varepsilon < \varepsilon_K <\varepsilon_{K-1}$. 
Therefore, we have 
$$\Y{\mathcal{E} u - \mathcal{E}_\varepsilon \mathcal{R}_\varepsilon u_K} < \frac{1}{K}$$
for any $\varepsilon <\varepsilon_K$.
Setting $u_\varepsilon := \mathcal{R}_\varepsilon u_K$ for $\varepsilon \in [\varepsilon_{K+1},\varepsilon_K)$ the claim follows.
\end{enumerate}
\end{proof}

\section*{Declarations}

\subsubsection*{Funding}
This work was funded by the Deutsche Forschungsgemeinschaft (DFG, German Research Foundation) under Germany's Excellence Strategy - EXC-2193/1 - 390951807 and SPP2256 project HE 8716/1-1, project ID:441154659.

\subsubsection*{Data availability}
We do not generate or analyse data sets as we follow a theoretical mathematical approach.

\subsubsection*{Conflict of interest}
The authors declare there is no conflict of interest.

\subsubsection*{Author contribution}
All authors contributed to the final manuscript.
Patrick Dondl came up with the idea of investigating a model with random long-range interactions that have constant material properties.
Martin Heida and Simone Hermann developed the exact setup of the model as well as the first outline of the proof.
Simone Hermann worked out almost all of the
technical details, performed the proofs and wrote the manuskript. 
Martin Heida added sections \ref{section2.1}, \ref{section2.1.2} and most of section \ref{sec3.1}.
Patrick Dondl and Martin Heida reviewed and edited the manuscript.

\providecommand{\href}[2]{#2}
\providecommand{\arxiv}[1]{\href{http://arxiv.org/abs/#1}{arXiv:#1}}
\providecommand{\url}[1]{\texttt{#1}}
\providecommand{\urlprefix}{URL }

\end{document}